\newtheorem{proposition}{Proposition}[section]
\newtheorem{example}[proposition]{Example}
\newtheorem{lemma}[proposition]{Lemma}
\newtheorem*{theorem*}{Theorem}
\newdimen\AAdi%
\newbox\AAbo%
\def\AArm{\fam0 }
\def\AAk#1#2{\setbox\AAbo=\hbox{#2}\AAdi=\wd\AAbo\kern#1\AAdi{}}%
\def\AAr#1#2#3{\setbox\AAbo=\hbox{#2}\AAdi=\ht\AAbo\raise#1\AAdi\hbox{#3}}%
\def\BBone{{\AArm 1\AAk{-.8}{I}I}}%
\newcommand {\CA}{{\mathcal {A}}}
\newcommand {\CB}{{\mathcal {B}}}
\newcommand {\CC}{{\mathcal {C}}}
\newcommand {\CD}{{\mathcal {D}}}
\newcommand {\CL}{{\mathcal {L}}}
\newcommand {\CP}{{\mathcal {P}}}
\newcommand {\CT}{{\mathcal {T}}}
\newcommand{\llb}{\llbracket}
\newcommand{\rrb}{\rrbracket}
\newcommand{\disp}{\displaystyle}
\newcommand{\eps}{\varepsilon}
\newcommand{\8}{\infty}
\def\m1{{-1}}
\newcommand{\ninf}{{n\rightarrow+\8}}
\newcommand{\ol}{\overline}
\def\S{\Sigma}
\def\s{\sigma}
\newcommand{\wh}{\widehat}
\newcommand{\wt}{\widetilde}
\newcommand{\pardef}{\stackrel{def}{=}}
\newcommand{\ul}[1]{\underline{#1}}
\newcommand{\R}{\Bbb{R}}
\newcommand{\C}{\Bbb{C}}
\newcommand{\Z}{\Bbb{Z}}
\theoremstyle{definition}
\newtheorem{remark}{Remark}
\newcommand{{\cstm}}{{\textit{$\phi$-cstm}}}
\newcommand{{\ecstm}}{{\textit{e$\phi$-cstm}}}
\newcommand{\B}{\CB}
\renewcommand{\C}{\CC}
\newcommand{\pA}{{\partial A}}
\newcommand{\pinf}{{+\8}}
\def\Psiin{\Psi_{\mathrm{in}}}
\def\Psiout{\Psi_{\mathrm{out}}}
\def\Kac{Ka$\mathrm{\check{c}}\ $}        
\begin{document}

\title{Large deviation for return times in open sets for axiom A diffeomorphisms}
\author{Renaud Leplaideur \& Beno\^\i t Saussol}

\begin{abstract}
For axiom A diffeomorphisms and equilibrium state, we prove a Large deviation result for the sequence of successive return times into a fixed open set, under some assumption on the boundary.
Our result relies on and extends the work by Chazottes and Leplaideur who where considering cylinder sets of a Markov partition. 
\end{abstract}

\keywords{large deviation, return time, thermodynamic formalism}
\subjclass[2000]{Primary: 37B20, 37C45, 37D20, 37D35, 60F10 }

\address{Laboratoire de Math\'ematiques, UMR 6205,  Universit\'e de Bretagne Occidentale, 6 rue Victor Le Gorgeu BP 809 F -  29285 BREST Cedex}
\urladdr{http://www.math.univ-brest.fr/~leplaideur}
\email{renaud.leplaideur@univ-brest.fr}

\email{benoit.saussol@univ-brest.fr}
\urladdr{http://www.math.univ-brest.fr/~saussol}
\date{Submitted \today}
\maketitle


\section{Introduction}

Recall that for any given measurable and ergodic dynamical system $(X,T,\mu)$, and for any set $A$ with positive $\mu$-measure, the \Kac theorem  implies that the sequence $r_A^n$ of return-times into $A$ by iterations of the map $T$ satisfies 
$$\lim_\ninf \frac{r^n_A(x)}{n}=\frac{1}{\mu(A)}.$$

The Large Deviation Principle we are interested in, gives estimates for the measure of the set of points whose $n$th return time is far from the asymptotic value; namely we want 
to show the existence of the \emph{rate function $\Phi_A$} such that
for every $u> \frac{1}{\mu(A)}$,
$$\lim_{n\to \infty}\frac1n\log\mu\left\{\frac{r^n_A}n\geq u\right\}=\Phi_A(u)
$$
and for every $\disp 0\leq u<\frac{1}{\mu(A)}$,
$$\lim_{n\to \infty}\frac1n\log\mu\left\{\frac{r_A^n}n\leq
u\right\}= \Phi_A(u).
$$
If this holds, we will say that the sequence of return-times into the set $A$ satisfies the LDP for the measure $\mu$. In a more restrictive case, if this property only holds for $u\subset ]\ul u,\ol u[$  with $\ul u<\frac{1}{\mu(A)}<\ol u$, we will say that the sequence of return-times into $A$ satisfies a LDP for the measure $\mu$ \emph{near the average}. 

A powerful method in Large Deviation Theory is to prove a \emph{level 2 large deviation} result, at the level of empirical measures, and then extract from this abstract result some precise information about our particular sequence~(See e.g.~\cite{dembo-zeitouni}). Although this method would certainly lead to some result, it is not straightforward since the return time into a set is in general not continuous
(it is even unbounded). This was already the case for cylinder sets considered in \cite{chazottes-leplaideur}, but here the picture is even worse since we are considering non-rectangle sets.

Another common and more naive method is to compute the \emph{cumulant generating function $\Psi_A$}, defined by the following limit (if it exists) 
\[
\Psi_A(\alpha)=\lim_{n\to\8} \frac1n \log \int e^{\alpha r_A^n}d\mu,
\]
and show that it is continuous and convex. Then it is straightforward to derive 
the relation between these two functions : they form a Legendre transform pair, namely
\begin{equation}\label{equ-psidonnephi}
\Phi_A(u)=\inf_{\alpha}\left\{-\alpha u +\Psi_A(\alpha)\right\}.
\end{equation}
This is the approach that we are adopting in this paper.

Our result applies to Axiom A diffeomorphism and equilibrium state of H\"older potential, 
and for the sequence of successive return times into an open set $A$ that satisfies a condition 
about the smallness of its (non-Markovian) boundary. Taking a Markov partition and the corresponding semi-conjugacy, we will state the result for subshifts of finite type, keeping in mind that this really corresponds to a result for open sets on the manifold.


\section{Statements}

Throughout, $(\Sigma,\sigma)$ will denote a mixing shift of
 finite type.  The set of vertices of the defining graph of $(\Sigma,\sigma)$ is
 $\{1,\ldots,N\}$ with $N\geq 2$. We denote by 
$\CA=(a_{ij})$ the $N\!\times\! N$-transition (aperiodic) matrix associated to $\Sigma$;
namely  points in $\Sigma$ are sequences ${x}=\{x_n\}_{n\in\Z}$ such that
for every $n$, $x_n$ belongs to $\{1,\ldots,N\}$ and
$$a_{x_n x_{n+1}}=1.$$ 
 
Let $\phi:\S\rightarrow\R$  be $\alpha$-H\"older continuous. For a given $\s$-invariant measure $\lambda$, the $\phi$-pressure  is the quantity  $P_\lambda(\phi):=h_\lambda(\s)+\int\phi\,d\lambda$; $P_\lambda(\phi)$ will also be called the $\nu$-pressure of $\phi$. The unique \emph{equilibrium state} for $\phi$, {\it i.e.} the unique $\s$-invariant probability measure with maximal $\phi$-pressure, will be denoted by $\mu_\phi$. Its pressure is the topological $\phi$-pressure.
 
For a set $A\subset \S$ and an integer $n$, we denote by $\partial A$ its topological boundary $\ol A\cap \ol{\S\setminus A}$. 

Note that $\pA$ can be empty; this holds for example when $A$ is a finite union of cylinders. We let $\wt\CP_\phi(\pA)$ be the $\phi$ pressure of $\pA$ ; since $\pA$ may not be invariant we define it according to the variational principle:
\[
\wt\CP_\phi(\pA)=\sup \left\{ h_\nu+\int \phi d\nu\colon \nu\text{ ergodic and }\nu(\pA)>0\right\}
\]
Note that this does not correspond to the dimension-like definition of the pressure introduced by Pesin and Pitskel~\cite{pesin-pitskel}.

If $D$ is any subset in $\S$, and for $x\in \S$, we denote by $r_D(x)$ the first hitting in $D$ by iterations of $\s$ (if it exists). Namely $r_D(x)$ is the smallest integer $n>1$ such that $\s^n(x)$ belongs to $D$, and $r_D(x)=+\8$ if no such integer exists. We also set $r^1_D(x)=r_D(x)$, and denote the $n$th return time $r^n_D(x)$ the cocycle defined by 
$$r^{n+1}_D(x)=r^n_D(x)+r_D(\s^{r^n_D(x)}(x)).$$

Then our result is: 

\begin{theorem*}
Let $A\subset \S$ be an open set. Let $\phi:\S\rightarrow\R$ be any H\"older continuous function. We have:
\begin{enumerate}
\item\label{point1} if for any $\s$-invariant measure $\mu$, $\mu(\pA)=0$, then the sequence $(r^n_A)_{n\geq 1}$ satisfies the Large Deviation Principle for $\mu_\phi$.
\item\label{point2} if the $\phi$-pressure $\wt\CP_\phi(\pA)$ of the boundary is strictly smaller than the $\phi$-topological pressure then the sequence $(r^n_A)_{n\geq 1}$ satisfies a  Large Deviation Principle for $\mu_\phi$ \emph{near the average}.
\end{enumerate}
\end{theorem*}

We recall that our method is based on the existence of the cumulant generating function
$\Psi_A(\alpha)$ for every $\alpha$ in some open set $]\ul\alpha,\ol\alpha[$. For the point~\ref{point1}, we will prove that the function $\Psi_A$ is defined on an interval $]-\8,\ol\alpha[$.
For the point~\ref{point2}, we will only get the existence of $\Psi_A$ on some open neighborhood $]\ul\alpha,\ol\alpha[$ of $0$.

The hypotheses $\mu(\pA)=0$ for any invariant measure $\mu$ could seem very restrictive;
however it should appear quite often in some general situations, as the following example suggests.
\begin{example}
Let $(M,f)$ be an hyperbolic automorphism of the 2-torus, and consider the family of balls $B(x,r)$ about a given point $x\in M$. Then, for all but countably many radii $r>0$, the condition $\mu(\partial B(x,r))=0$ for every invariant measure $\mu$ is satisfied.
\end{example}
\begin{proof}
Let $S$ be the boundary of a ball. Using hyperbolicity one can show that the intersection of $S$ with its images $f^n(S)$ consists, at most, of countably many points. 
Hence the set of recurrent points $R(S)$ in $S$ is at most countable. 
If an invariant measure gives weight to $S$, by Poincare Recurrence Theorem it implies that it gives weight to $R(S)$ which is a countable set. Thus the measure must have an atomic part consisting of a periodic orbit. Hence $S$ must contain a periodic point.
Since the set of periodic points of such a map is countable there can be at most countably many boundaries $\partial B(x,r)$ carrying a periodic point as $r$ varies, which proves the proposition.
\end{proof}

The hypotheses in Point \ref{point2} about the pressure of the boundary appears quite naturally in the thermodynamic formalism of dynamical systems with singularities. In the case of $\phi=0$ it simply says that the boundary $\pA$ does not carry full measure theoretical entropy. 
A more explicit condition can be given on the manifold itself.
\begin{proposition}\label{pro-bdpress-mfd}
Let $f$ be an axiom A diffeomorphism of a manifold $M$ and let $\mu_\phi$ be an equilibrium state of a H\"older potential $\phi$. Let $V$ be a Borel set and denote by $U_\eps(\partial V)$
the $\eps$-neighborhood of the set $\partial V$. Assume that there exist some constants $c>0$ and $a>0$ such that 
\[
\mu_\phi(U_\eps(\partial V)) \le c \eps^a \quad \forall \eps>0.
\]
Then, $\wt\CP_\phi(\partial V)<\CP_\phi(M)$.
\end{proposition}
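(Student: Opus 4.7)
The plan is to fix a Markov partition for the axiom~A diffeomorphism and compare two estimates on the cylinders that meet $\partial V$: an upper bound on their total $\mu_\phi$-mass coming from the polynomial decay hypothesis, and a lower bound on the same quantity coming from the generic (Shannon--McMillan--Breiman) cylinders of any ergodic invariant measure $\nu$ with $\nu(\partial V)>0$. Let $\mathcal{R}$ be a Markov partition of $M$ of small mesh, and let $C_N$ denote a two-sided cylinder of total length $N=2n$ (specifying $n$ past and $n$ future coordinates). By uniform hyperbolicity, $\mathop{\mathrm{diam}}(C_N)\le C_0 e^{-\lambda'N}$ for some fixed $\lambda'>0$, and by the Gibbs property of $\mu_\phi$ (Bowen's theorem for equilibrium states of H\"older potentials on axiom~A systems),
$$\mu_\phi(C_N)\asymp \exp\!\bigl(-N\CP_\phi(M)+S_N\phi(x_{C_N})\bigr)$$
for any $x_{C_N}\in C_N$, with implicit constants independent of $N$.

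Let $\mathcal{C}_N(\partial V)$ be the collection of length-$N$ cylinders meeting $\partial V$. Any such $C$ lies in $U_{C_0 e^{-\lambda'N}}(\partial V)$, so disjointness of cylinders and the hypothesis give
$$\sum_{C\in \mathcal{C}_N(\partial V)} \mu_\phi(C)\le \mu_\phi\!\left(U_{C_0 e^{-\lambda'N}}(\partial V)\right)\le c_1 e^{-a\lambda'N}.$$
Now fix any ergodic invariant $\nu$ with $\nu(\partial V)>0$. By Shannon--McMillan--Breiman applied to $\nu$ and $\mathcal{R}$, together with Birkhoff's ergodic theorem, for any $\eta>0$ and $N$ large enough there is $G_N\subset \partial V$ with $\nu(G_N)\ge\nu(\partial V)/2$ on which $\nu(C_N(x))\le e^{-N(h_\nu-\eta)}$ and $S_N\phi(x)\ge N\!\int\!\phi\,d\nu-N\eta$. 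Since cylinders are disjoint, the family $\{C_N(x):x\in G_N\}\subset \mathcal{C}_N(\partial V)$ contains at least $\nu(G_N)\,e^{N(h_\nu-\eta)}$ distinct members, and by Gibbs each contributes at least $c_2\,e^{-N(\CP_\phi(M)-\int\phi\,d\nu+\eta)}$ to the displayed sum. Comparing this lower bound with the upper bound $c_1 e^{-a\lambda'N}$ and letting first $N\to\infty$ and then $\eta\to 0$ yields
$$h_\nu+\int\phi\,d\nu\le \CP_\phi(M)-a\lambda',$$
uniformly in $\nu$. Hence $\wt\CP_\phi(\partial V)\le \CP_\phi(M)-a\lambda'<\CP_\phi(M)$.

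The principal technical point is the diameter bound on $C_N$: it requires using two-sided cylinders so that both the stable and unstable directions contract, which costs a factor $\tfrac12$ in the effective contraction rate $\lambda'$; the Gibbs estimate, usually stated for one-sided future cylinders of length $N$, transfers to symmetric two-sided cylinders of symbol-length $2n=N$ via $\sigma$-invariance. A secondary point is that one should work on a single basic set of the spectral decomposition, on which $\mu_\phi$ is supported, so that the Gibbs constants and the Markov coding are well behaved; this is automatic since the equilibrium state of a H\"older potential is unique. With these in place, the argument is a clean combination of Shannon--McMillan--Breiman, Birkhoff's theorem, the Gibbs property of $\mu_\phi$, and the polynomial decay $\mu_\phi(U_\eps(\partial V))\le c\eps^a$ of the hypothesis.
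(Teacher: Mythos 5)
Your proof is correct and follows essentially the same route as the paper's: the paper reduces to its symbolic Proposition~\ref{pro-bdpress} via the Markov semi-conjugacy and the exponential decay of cylinder diameters, and Proposition~\ref{pro-bdpress} is itself proved by exactly the ingredients you use (Gibbs property, Shannon--McMillan--Breiman, Birkhoff, comparison of upper and lower bounds on the mass of cylinders meeting the boundary). The only difference is cosmetic: the paper phrases the comparison as an integral $\int_K \mu_\phi(\wt Z_n^x)/\nu(\wt Z_n^x)\,d\nu$ combined with Fatou's lemma, whereas you use a direct counting argument with a good set $G_N$; these are equivalent.
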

See Section~\ref{sec-proof-th2} for further details.
In particular we have:
\begin{example}
Let $(M,f)$ be a $C^2$ \emph{volume preserving} Anosov diffeomorphism and let $V\subset M$ be an open set with piecewise $C^1$ boundary. Then the sequence of return times into $V$ satisfies a Large Deviation Principle near the average.
\end{example}
\begin{proof}
Set $\phi=-\log |D_x^uf|$. The equilibrium measure $\mu_\phi$ is the SBR measure which is here the volume measure, and the assumption in Proposition~\ref{pro-bdpress-mfd} is clearly satisfied.
\end{proof}

\medskip
\textit{Outline of the proof of the theorem: } in Section \ref{sec-CB-CC} we recall how the LDP was obtained for the return-times in cylinders. 
In Section~\ref{sec-exis-psiA} we compare the cumulant generating functions of inner and outer approximation of our set $A$ by union of cylinders. 
In section~\ref{sec-proof-th2}, under the assumption of the point~\ref{point2} of the theorem,  we prove the existence of the cumulant generating function $\Psi_A$ on some interval. 
In section~\ref{sec-proof-th1} we give a dynamical proof of the statement in point~\ref{point1}
if the theorem.

\section{Large deviation for return time in cylinders}\label{sec-CB-CC}

We first recall the local thermodynamic formalism introduced in \cite{leplaideur1}. Then we recall how the large deviation principle for union of cylinders was obtained in \cite{chazottes-leplaideur}. Finally, we derive a uniform mass concentration principle. 


\subsection{Induced systems and local thermodynamic formalism}\label{subsec-thermo-loc}

For a given point $x=(x_n)_{n\in\Z}\in \S$, the past (resp. future) of the point denotes the backward (resp. forward) sequence  $(x_n)_{n\leq 0}$ (resp. $(x_n)_{n\geq 0}$).  
For $x$ and $y$ in $\S$, when $x_0=y_0$, the point $z\disp\pardef\llb x,y\rrb$ is the point obtained when we take the past of $y$ and the future of $x$. 

In $\Sigma$, the {\it cylinder} $[i_k,\ldots,i_{k+n}]$
will denote the set of points $x\in\Sigma$ such that $x_j=i_j$ (for every $k\leq
j\leq k+n$). Such a cylinder will also be called a word (of length
$n+1$) or equivalently a $(k,k+n)$-cylinder. 
If $x$ is in $\Sigma$, $C_{k,k+n}(x)$ will denote the
cylinder $[i_k,\ldots,i_{k+n}]$ such that $x_j=i_j$ (for every $k\leq
j\leq k+n$). By extension, $C_{-\8,n}(x)$ will denotes the set of points $(y_k)$ such that $y_k=x_k$ for every $k\leq n$; similarly $C_{n,+\8}(x)$ will denotes the set of points $(y_k)$ such that $y_k=x_k$ for every $k\geq n$. By definition, the local unstable leaf $W^u_{loc}(x)$ is $C_{-\8,0}(x)$, and the local stable leaf $W^s_{loc}(x)$ is $C_{0,+\8}(x)$. For $n\geq 0$, a $n$-cylinder will denote a $(-n,n)$-cylinder. 
The letter $R=\cup R_i$ denotes some finite union of $(-n,n)$-cylinders; in each of these cylinders we fix some local unstable leaf $F_i$. There is a natural projection from each $R_i$ onto each $F_i$ defined by  $\pi_{F_i}(z)=\llb z,x\rrb$, where $x$ is any point in $F_i$. For convenience we denote by $\pi_F$ the map defined on $R$ by 
$$\pi_F(z)=\pi_{F_i}(z)\iff z\in R_i.$$
We  denote by $g$ the first return map in $R$, and by $g_F$ the map $\pi_F\circ g$.  We thus have $g(x)=\s^{r_R(x)}(x)$. Note that if the maps $r_R$, $g$ and $g_F$ are not defined everywhere in $R$, the inverse branches of $g_F$ are well defined in the whole $F$. 

We can thus define the Ruelle-Perron-Frobenius operator for $g_F$: for $x$ in $F$,  we set 
$$\CL_S(\CT)(x)=\sum_{y,\  g_F(y)=x}e^{S_{r_R(y)}(\phi)(y)-r_R(y).S}\CT(y),$$
where $\CT:F\rightarrow\R$ is a continuous function, and $S$ is a real parameter.  As usual, $S_n(\phi)(x)$ denotes the Birkhoff sum $\phi(x)+\ldots+\phi\circ\s^{n-1}(x)$.

There exists some critical $S_c$, such that for every $S>S_c$ all the following holds: $\CL_S$ admits a unique and single dominating eigenvalue $\lambda_S$ in the set of $\alpha$-H\"older continuous functions. 
The adjoint operator $\CL_S^*$ has also $\lambda_S$ for unique and single dominating eigenvalue;  we denote by $\nu_S$ the unique probability measure on $F$ such that $\CL_S^*(\nu_S)=\lambda_S.\nu_S$. We denote by $H_S$, the unique $\alpha$-H\"older continuous and positive function on $F$ satisfying $\CL_S(H_S)=\lambda_S.H_S$ and $\int H_S\,d\nu_S=1$.
We also denote by $\mu_S$ the measure $H_S.\nu_S$, and by $\wh\mu_S$ the natural extension of $\mu_S$. We recall that $\mu_S$ is a $g_F$-invariant probability measure, and $\wh\mu_S$ is a $g$-invariant probability measure. At last, we denote by $m_S$ the opened-out measure: namely $m_S$ is the $\s$-invariant measure satisfying, $m_S(R)>0$, and $\wh\mu_S$ is the conditional measure $m_S(.|R)$. 

The spectral properties of $\CL_S$ yield the existence of  positive real constants $C_\phi$ and $\eps_S$,  such that for every  H\"older continuous $\CT:F\rightarrow\R$, for every integer $n\geq 1$, for every $x$ in $F$ and for every $S>S_c$
\begin{equation}\label{eq1-trouspect-perron}
\CL_S^n(\CT)(x)=e^{n\log\lambda_S}\int\CT\,d\nu_SH_S(x)+O(e^{n(\log\lambda_S-\eps_S)})||\CT||_\8.
\end{equation}
Note that $H_S$ is a positive function on the compact set $F$. 

We now finish this subsection with some important characterization for the measure $m_S$. 
\begin{lemma}\label{lem-carac-mus}
The measure $m_S$ is the unique equilibrium state in $(\S,\s)$ associated to $\phi-\log\lambda_S.\BBone_R$. Moreover, the  $\phi-\log\lambda_S.\BBone_R$-pressure is $S$. 
\end{lemma}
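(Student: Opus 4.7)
The plan is to combine the Abramov and Kac formulas with the induced thermodynamic formalism already encoded in the spectral data of $\CL_S$. Given any $\s$-invariant probability measure $m$ with $m(R)>0$, the conditional $\wh\mu:=m(\,\cdot\mid R)$ is $g$-invariant, Abramov gives $h_m(\s)=m(R)h_{\wh\mu}(g)$, Kac gives $m(R)\int r_R\,d\wh\mu=1$, and unfolding Birkhoff sums along $g$-orbits yields $\int f\,dm=m(R)\int_R S_{r_R}(f)\,d\wh\mu$. Applied to $f=\phi$ and to $f=\BBone_R$ (where $S_{r_R}(\BBone_R)=1$ since $g$ is the \emph{first} return), this produces the key identity
\[
h_m(\s)+\int(\phi-\log\lambda_S\BBone_R)\,dm
=m(R)\Bigl[h_{\wh\mu}(g)+\int_R S_{r_R}(\phi)\,d\wh\mu-\log\lambda_S\Bigr].
\]

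Next I would invoke the spectral identity \eqref{eq1-trouspect-perron} and its standard consequence that $\log\lambda_S$ is the $g_F$-pressure of the induced potential $S_{r_R}(\phi)-Sr_R$, attained \emph{uniquely} by $\mu_S$. Transported via $\pi_F$ and the natural extension, this reads $h_{\wh\mu}(g)+\int(S_{r_R}(\phi)-Sr_R)\,d\wh\mu\leq\log\lambda_S$, with equality iff $\wh\mu=\wh\mu_S$. Plugging this bound into the bracket above and using the Kac identity $m(R)\int r_R\,d\wh\mu=1$, the right-hand side collapses to $S$, with equality exactly when $m=m_S$. Thus $m_S$ uniquely maximises $h_m+\int(\phi-\log\lambda_S\BBone_R)\,dm$ among $\s$-invariant measures charging $R$, and the value is $S$.

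To promote this to a topological statement it remains to rule out invariant measures with $m(R)=0$. Here I would use that $R$ is a finite union of cylinders, so $\BBone_R$ is locally constant and $\phi-\log\lambda_S\BBone_R$ is $\alpha$-H\"older on the mixing SFT $(\S,\s)$. Bowen's theorem then delivers a unique equilibrium state with full support, which automatically charges the open set $R$ and must therefore coincide with $m_S$ by the previous paragraph; in particular the topological pressure of $\phi-\log\lambda_S\BBone_R$ equals $S$.

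The most delicate ingredient is the identification of the one-sided spectral picture on $F$ provided by $\CL_S$ with the two-sided induced thermodynamic formalism on $R$: the eigendata $\lambda_S,H_S,\nu_S,\mu_S$ live on $F$, while $\wh\mu_S$ and $m_S$ live on $\S$, so the variational principle for $S_{r_R}(\phi)-Sr_R$ must be carried along the projection $\pi_F$ and along the natural extension. Once this standard identification is carried out, the whole argument reduces to the single computation above.
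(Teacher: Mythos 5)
Your proposal is correct and the core — Abramov's formula, Kac's formula, and the variational characterization of $\log\lambda_S$ as the $g_F$-pressure of the induced potential $S_{r_R}(\phi)-Sr_R$ — coincides with the paper's proof. The one divergence is in ruling out invariant measures with $\nu(R)=0$. The paper observes that such a $\nu$ is carried by the dotted subshift $\S_R=\bigcap_{n\in\Z}\s^{-n}(\S\setminus R)$, whose $\phi$-pressure is exactly the critical value $S_c$ (a characterization from \cite{chazottes-leplaideur} recalled in the subsection following the lemma), so that $h_\nu+\int(\phi-\log\lambda_S\BBone_R)\,d\nu = h_\nu+\int\phi\,d\nu\le S_c<S$. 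You instead invoke Bowen's theorem for the H\"older potential $\phi-\log\lambda_S\BBone_R$ on the mixing SFT: the unique equilibrium state is Gibbs, hence has full support, hence charges the open set $R$, hence by your first-paragraph bound achieves pressure at most $S$; combined with $m_S$ already achieving $S$, this forces the topological pressure to be $S$ and the equilibrium state to be $m_S$. Both routes are sound. Yours sidesteps the $S_c$ characterization of the dotted system entirely, which makes the lemma more self-contained, but it leans on the Gibbs/full-support machinery; the paper's argument gives a direct pressure bound for measures with $\nu(R)=0$ and stays within the induced thermodynamic formalism of \cite{leplaideur1,chazottes-leplaideur}.
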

\begin{proof}
for simplicity we set $\beta:=\log\lambda_S$
The measure $m_S$ satisfies for every $S>S_c$,
\begin{equation}\label{equ2-preuvecvms}
h_{m_S}(\s)+\int\phi\,dm_S=S+m_S(R)\beta.
\end{equation}
We refer the reader to \cite{leplaideur1}, prop. 6.8 for a proof.  Moreover, the measure $\wh\mu_S$ is the unique equilibrium state for $(R,g)$ associated to the potential $S_{r(.)}(\phi)(.)-S.r(.)$, with pressure $\beta$. Let us pick some $\s$-invariant probability measure, $\nu$.

Let us first assume that $\nu(R)>0$. We have,  
\begin{eqnarray*}
h_\nu(\s)+\int\phi\,d\nu-S&=&\nu(R)\left(h_{\nu_{|R}}(g)+\int S_{r(.)}(\phi)\,d\nu_{|R}-S.\int r(.)\,d\nu_{|R}\right),\\
&\leq&\nu(R)\beta,
\end{eqnarray*}
where $\nu_{|R}$ is the conditional measure $\nu(.|R)$. This gives 
$$h_\nu(\s)+\int\phi\,d\nu-\beta.\int\BBone_R\,d\nu\leq S,$$
with equality if and only if $\nu_{|R}=\wh\mu_S$ ({\it i.e.} $m_S=\nu$).

If we assume that $\nu(R)=0$, then $\nu$ is a $\s$-invariant probability measure with support in $\S_R$. Therefore it must satisfy 
$$h_\nu(\s)+\int\phi\,d\nu-\beta.\int\BBone_R\,d\nu=h_\nu(\s)+\int\phi\,d\nu\leq S_c<S.$$
This finishes the proof of the lemma.
\end{proof}

\subsection{Large deviation for return times in cylinders}

In \cite{chazottes-leplaideur}, it is proved that the critical value $S_c$ is the pressure of the dotted system, with hole $R$, associated to the potential $\phi$. Namely we consider in $\S$ the system $\S_R:=\bigcap_{n\in\Z}\s^{-n}(\S\setminus R)$ . Up to the fact that this new system is mixing, it was proved in \cite{chazottes-leplaideur} that its $\phi$-pressure is the critical $S_c$. We claim that the mixing hypothesis  can be omitted.  Indeed, any subshift of finite type can be decomposed in irreducible components, which satisfy the mixing property, but for some iteration of the map $\s$ (see e.g.~\cite{bowen}). As we are considering first returns in $R$, note that the word defined by the cylinder $C_{0,r_R(x)}(x)$ contains no $R_i$ but at the first position. 
Now, two different irreducible components can be joined in $\S$ only by a path which contains $R$. Therefore, the word  defined by the cylinder $C_{0,r_R(x)}(x)$ is an admissible word for a unique irreducible component.

Unicity of the equilibrium state in any mixing subshift (for $\phi$) implies that the topological $\phi$-pressure $\CP_\phi(\S_R)$ for $(\S_R,\s)$ is strictly lower than the topological $\phi$-pressure for $\S$, $\CP_\phi(\S)$. 

In \cite{chazottes-leplaideur}, it is also proved that $\lambda_S\rightarrow+\8$ as $S$ goes to $S_c$. Moreover, the map $S\mapsto \log\lambda_S$ is a decreasing convex map on $]S_c,+\8[$. There also exists some complex neighborhood of $]S_c,+\8[$ such that the map $S\mapsto \log\lambda_S$ admits an analytic extension on it. In particular the map $S\mapsto\log\lambda_S$ is real-analytic on $]S_c,+\8[$.

Finally, it is proved in \cite{chazottes-leplaideur} that for every $\alpha<\alpha(R):=\CP_\phi(\S)-\CP\phi(\S_R)$,
\begin{equation}\label{eq2-gd-rectangle}
\lim_\ninf\frac1n\log\int_R e^{\alpha.r^n_R(x)}\,d\mu_\phi=\log\lambda_{\CP_\phi(\S)-\alpha}.
\end{equation}

We shall show now that the large deviation for successive return time and entrance time is the same question; namely, the fact that we are starting from the set $R$ or from the whole space to compute the integral does not make any difference.
\begin{proposition}
If $R$ and $S$ are finite unions of cylinders, then 
\[
\Psi_R(\alpha)=\lim\frac1n\log \int_{S} e^{\alpha r_R^n}d\mu,
\]
in particular we have $\Psi_R(\alpha)=\log\lambda_{\CP_\phi(\S)-\alpha}$.
\end{proposition}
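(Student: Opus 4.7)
The plan is to show that substituting $R$ by any finite union of cylinders $S$ as the integration domain does not change the exponential rate; the ``in particular'' clause then follows from~\eqref{eq2-gd-rectangle} upon taking $S=R$. Throughout I restrict to $\alpha<\alpha(R):=\CP_\phi(\S)-\CP_\phi(\S_R)$, the range in which~\eqref{eq2-gd-rectangle} is available. Taking $S=\S$ in the proposition recovers the limit defining $\Psi_R(\alpha)$ itself.

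The starting point is the cocycle identity $r^n_R(x)=\tau(x)+r^{n-1}_R(\s^{\tau(x)}x)$, where $\tau:=r_R$ is the first hitting time to $R$ (so that $\s^{\tau(x)}x\in R$). Decomposing $S$ according to the value of $\tau$ and using the $\s$-invariance of $\mu_\phi$, I rewrite
\[
\int_S e^{\alpha r^n_R}\,d\mu_\phi=\sum_{k\geq 2}e^{\alpha k}\int_{\wt S_k}e^{\alpha r^{n-1}_R(y)}\,d\mu_\phi(y),
\]
where $\wt S_k:=\s^k(S\cap\{\tau=k\})\subseteq R$. The event $\{y\in\wt S_k\}$ depends only on the negative coordinates of $y$ (its past), while $r^{n-1}_R(y)$ depends only on its non-negative coordinates (its future). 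The Gibbs property of $\mu_\phi$ then yields a past--future quasi-independence bound
\[
\int_{\wt S_k}e^{\alpha r^{n-1}_R}\,d\mu_\phi\leq C\,\mu_\phi(\wt S_k)\,\frac{1}{\mu_\phi(R)}\int_R e^{\alpha r^{n-1}_R}\,d\mu_\phi,
\]
with $C$ independent of $k$ and $n$. Combined with the exponential tail estimate $\mu_\phi(\tau=k)\leq C'e^{-k\alpha(R)}$ (another consequence of the Gibbs bounds and of the identity $\CP_\phi(\S_R)=S_c$ recalled above) and with~\eqref{eq2-gd-rectangle} applied to $\int_R e^{\alpha r^{n-1}_R}\,d\mu_\phi$, summing in $k$ gives $\int_S e^{\alpha r^n_R}\,d\mu_\phi\leq C''\int_R e^{\alpha r^{n-1}_R}\,d\mu_\phi$ and hence $\limsup_n \frac1n\log\int_S\leq\log\lambda_{\CP_\phi(\S)-\alpha}$.

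For the matching lower bound I exploit the topological mixing of $(\S,\s)$: there exists an integer $M$ and, for any pair $[s]\subset S$, $[r]\subset R$ of cylinders, an admissible word of length $M$ connecting them that may be chosen to avoid $R$ in between. This produces a measurable injection $\iota:R\to S$ with $\s^M\circ\iota=\mathrm{id}_R$, $\tau\circ\iota\equiv M$, and $\mu_\phi(\iota(B))\geq c\,\mu_\phi(B)$ for every Borel $B\subseteq R$ by the Gibbs estimate (with $c>0$ depending only on the fixed prefix). Since $r^n_R(\iota(y))=M+r^{n-1}_R(y)$, we obtain $\int_S e^{\alpha r^n_R}\,d\mu_\phi\geq c\,e^{\alpha M}\int_R e^{\alpha r^{n-1}_R}\,d\mu_\phi$, which furnishes the matching lower bound. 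The hard step is the past--future quasi-independence estimate: it must be uniform in $k$, and although it is a standard consequence of the Bowen--Ruelle Gibbs bounds on mixing subshifts of finite type, the bookkeeping is slightly delicate since the sets $\wt S_k$ are not individual cylinders and their shape varies with $k$.
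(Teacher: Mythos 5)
Your approach is genuinely different from the paper's. The paper's proof is a one-liner: from the elementary cocycle inequalities
\[
r_R^{n-(M+2m)}\circ \s^{M+2m}\;\le\; r_R^n\;\le\; (M+2m)+r_R^{n}\circ \s^{M+2m},
\]
one integrates $\BBone_S\cdot e^{\alpha r_R^n}$ and applies the $\psi$-mixing bound \eqref{equ-psimixquivabien} directly (the two factors then depend on coordinates $\le m$ and $\ge m+M$), yielding the comparison of $\int_S$ with $\int_\Sigma$, and a second application passes from $\int_\Sigma$ to $\int_R$. Your decomposition of $S$ by the first hitting time $\tau$ and the use of Gibbs bounds plus the tail estimate $\mu_\phi(\tau=k)\lesssim e^{-k\alpha(R)}$ is a viable alternative design, but as written it has two genuine gaps.

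\emph{Upper bound.} The claim that $\{y\in \wt S_k\}$ depends only on the negative coordinates of $y$ is not correct: $\wt S_k\subset R$, so membership also constrains $y_{-m},\dots,y_m$; in addition, $r_R^{n-1}(y)$ for $y\in R$ depends on coordinates from about $-m$ onward (an $R$-cylinder is a $(-m,m)$-cylinder, and the first return tests $\s^j y\in R$ starting at small $j$). Thus past and future overlap on a window of width about $2m$, and the quasi-independence does not apply directly; you must first insert a gap, e.g.\ replace $r_R^{n-1}$ by $r_R^{n-1-(M+2m)}\circ \s^{M+2m}$ at the price of the extra factor $e^{|\alpha|(M+2m)}$ — exactly the manoeuvre the paper performs. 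This is more than bookkeeping: without it the key inequality you assert is not a consequence of $\psi$-mixing as stated.

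\emph{Lower bound.} Since $\s$ is invertible, the only map with $\s^M\circ\iota=\mathrm{id}_R$ is $\iota=\s^{-M}$, and $\s^{-M}(R)$ is in general not contained in $S$. What you presumably intend is a map that keeps the coordinates of $\s^{-M}y$ on $[M-m,+\infty)$ and overwrites the earlier ones to land in $S$, but then $\s^M\circ\iota\ne\mathrm{id}_R$ and one has to argue that this does not disturb $r_R^{n-1}$. More seriously, the assertion that for every pair of cylinders $[s]\subset S$, $[r]\subset R$ there is an admissible connecting word of bounded length that avoids $R$ in between is not justified and can fail: topological mixing of $\Sigma$ gives a connecting word but with no control over visits to $R$, and the dotted system $\Sigma_R$ need not even be mixing (the paper explicitly addresses this). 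The paper's argument sidesteps the issue entirely, needing only the crude count that the window $[1,M+2m]$ contains at most $M+2m$ visits to $R$. If you insist on a constructive lower bound, that counting bound (not an $R$-avoiding word) is what you should use.
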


\begin{remark}\label{rem-ldprectangle}
As mentioned in the introduction, this readily implies the large deviation principle for return times in the form given by~\eqref{equ-psidonnephi}.
\end{remark}

The proposition is a weak consequence of the $\psi$-mixing property of the measure $\mu_\phi$. Indeed, there exists $M>0$ and $\kappa>1$ such that if $f$ and $g$ are two integrable functions such that $f(x)$ only depends on $(x_n)_{n\le p}$ and $g(x)$ only depends on $(x_n)_{n\ge p+M}$ then
\begin{equation}\label{equ-psimixquivabien}
\kappa^{-1}\int f d\mu_\phi\int gd\mu_\phi\le\int fg d\mu_\phi \le \kappa \int fd\mu_\phi \int g d\mu_\phi.
\end{equation}
\begin{lemma}
If $R$ and $S$ are finite union of $(-m,m)$ cylinder then for any $n\ge M+2m$ and for 
\[
\begin{split}
\kappa^{-1}\mu(S)\int_\Sigma e^{\alpha r_R^{n-(M+2m)}} d\mu_\phi
&\le \int_S e^{\alpha r_R^n} d\mu_\phi\le 
\kappa\mu(S)e^{\alpha(M+2m)}\int_\Sigma e^{\alpha r_R^{n}} d\mu_\phi
\quad(\alpha\ge0)\\
\kappa^{-1}\mu(S)e^{\alpha(M+2m)}\int_\Sigma e^{\alpha r_R^{n}} d\mu_\phi
&\le \int_S e^{\alpha r_R^n} d\mu_\phi\le 
\kappa\mu(S)\int_\Sigma  e^{\alpha r_R^{n-(M+2m)}}d\mu_\phi
\quad(\alpha\le0).
\end{split}
\]
\end{lemma}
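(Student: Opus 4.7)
The plan is to reduce the claim to one application of the $\psi$-mixing estimate~\eqref{equ-psimixquivabien}. The function $\BBone_S$ depends only on coordinates in $[-m,m]$, but the integrand $e^{\alpha r_R^n}$ a priori probes coordinates anywhere from index $-m+1$ (the test $\sigma(x)\in R$ requires $x_{1-m},\dots,x_{1+m}$) up to $r_R^n(x)+m$. To apply $\psi$-mixing one must first decouple the event $\{x\in S\}$ from the return-time functional, which I will do by translating the starting point forward by $T:=M+2m$ steps.

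Setting $N(x):=\#\{1\le k\le T:\sigma^k x\in R\}$, the basic identity
\[
r_R^{N(x)+n}(x)=T+r_R^n(\sigma^T x)
\]
is immediate from the definition of successive return times. Together with $0\le N(x)\le T$ and monotonicity of $k\mapsto r_R^k$, it yields the pointwise sandwich
\[
r_R^{n-T}(\sigma^T x)\le r_R^n(x)\le T+r_R^n(\sigma^T x),
\]
valid for every $n\ge T$.

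For $k\in\{n-T,n\}$ the functional $x\mapsto r_R^k(\sigma^T x)$ depends only on coordinates $x_j$ with $j\ge T+1-m=m+M+1$, so it is separated from $\BBone_S$ by a gap at least $M$, and~\eqref{equ-psimixquivabien} gives
\[
\kappa^{-1}\mu(S)\int e^{\alpha r_R^k\circ\sigma^T}\,d\mu_\phi
\le\int_S e^{\alpha r_R^k\circ\sigma^T}\,d\mu_\phi
\le\kappa\,\mu(S)\int e^{\alpha r_R^k\circ\sigma^T}\,d\mu_\phi.
\]
By $\sigma$-invariance of $\mu_\phi$ each outer integral equals $\int e^{\alpha r_R^k}\,d\mu_\phi$.

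The four inequalities of the lemma then follow by multiplying the sandwich by $\alpha$ (flipping inequalities when $\alpha\le 0$), exponentiating, and integrating over $S$ with the two-sided mixing bound above. For $\alpha\ge 0$ the left half of the sandwich gives the lower bound involving $r_R^{n-T}$ and the right half gives the upper bound with the extra factor $e^{\alpha T}$; for $\alpha\le 0$ the roles swap. I do not foresee a genuine obstacle: the only conceptual step is to choose the shift $T=M+2m$ so that the coordinate supports are compatible with the $\psi$-mixing gap $M$, and the rest is sign bookkeeping.
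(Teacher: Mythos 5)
Your proof is correct and is essentially the paper's argument: the paper records the same pointwise sandwich $r_R^{n-(M+2m)}\circ \sigma^{M+2m}\le r_R^n\le (M+2m)+r_R^n\circ\sigma^{M+2m}$ and then invokes the $\psi$-mixing estimate~\eqref{equ-psimixquivabien}. You merely expand the two steps (justifying the sandwich via the count $N(x)$ of early visits, and checking the coordinate gap) that the paper leaves implicit.
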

\begin{proof}
For any $n\ge M+2m$ we have
\[
r_R^{n-(M+2m)}\circ f^{M+2m} \le r_R^n \le M+2m+r_R^{n}\circ f^{M+2m}
\]
from which the result follows by inequality \eqref{equ-psimixquivabien}. 
\end{proof}
The proof of the proposition consists in applying twice the lemma : from the integration over $S$ to $\Sigma$ and then to $R$.

\subsection{Concentration of the mass}

Let $R$ be a finite union of cylinders. The large deviation principle holds for the return times $r^n_R$. It is well-known that this implies a kind of concentration of the mass. 

\begin{proposition}\label{prop-concen-mass}
Let $R$ be a finite union of cylinders. Let $\alpha$ and $\delta>0$ such that $\alpha+\delta<\alpha(R)$.
Then for every $\tau>\displaystyle\frac{\Psi_R(\alpha+\delta)-\Psi_R(\delta)}{\delta}$ we have
$$\Psi_{R}(\alpha)=\lim_\ninf\frac1n\log\int e^{\alpha.r^n_{R}} d\mu_\phi=\lim_\ninf\frac1n\log\int_{\left\{r^n_{R}\leq n\tau\right\}}e^{\alpha.r^n_{R}}d\mu_\phi.$$
\end{proposition}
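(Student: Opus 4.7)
My plan is to reduce the second equality to showing that the tail
\[
T_n := \int_{\{r^n_R > n\tau\}} e^{\alpha r^n_R}\,d\mu_\phi
\]
has exponential rate strictly less than $\Psi_R(\alpha)$; the first equality is just the preceding proposition applied with $S=\S$. Once the tail is shown to be exponentially negligible, the decomposition $\int e^{\alpha r^n_R}\,d\mu_\phi = \int_{\{r^n_R\le n\tau\}} e^{\alpha r^n_R}\,d\mu_\phi + T_n$ will force the truncated integral to inherit the full exponential rate $\Psi_R(\alpha)$.

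The tail estimate I would use is the exponential Chebyshev bound. On $\{r^n_R > n\tau\}$ one has $1 \le e^{\delta(r^n_R - n\tau)}$ (using $\delta>0$), whence
\[
T_n \le e^{-\delta n\tau} \int e^{(\alpha+\delta) r^n_R}\,d\mu_\phi.
\]
Because $\alpha+\delta<\alpha(R)$, the formula $\Psi_R(\beta)=\log\lambda_{\CP_\phi(\S)-\beta}$ recalled in Subsection~3.2, together with the preceding proposition, guarantees that the right-hand integral is $e^{n\Psi_R(\alpha+\delta)+o(n)}$. Taking $\frac{1}{n}\log$ and passing to the limsup yields
\[
\limsup_n \frac{1}{n}\log T_n \le \Psi_R(\alpha+\delta)-\delta\tau.
\]

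For this to be strictly smaller than $\Psi_R(\alpha)$ the natural condition is $\tau > \frac{\Psi_R(\alpha+\delta)-\Psi_R(\alpha)}{\delta}$. Since $\Psi_R$ is convex with $\Psi_R(0)=0$, the secant-slope monotonicity gives $\Psi_R(\alpha)\ge\Psi_R(\delta)$ in the range of interest, so the stated hypothesis $\tau > \frac{\Psi_R(\alpha+\delta)-\Psi_R(\delta)}{\delta}$ is a (possibly) stronger demand which implies what we need. Setting $\gamma := \Psi_R(\alpha) - \bigl(\Psi_R(\alpha+\delta)-\delta\tau\bigr) > 0$, the bound above gives $T_n / \int e^{\alpha r^n_R}\,d\mu_\phi = O(e^{-n\gamma/2})$ for $n$ large, so $\int_{\{r^n_R\le n\tau\}} e^{\alpha r^n_R}\,d\mu_\phi$ is asymptotic to $\int e^{\alpha r^n_R}\,d\mu_\phi$ at the exponential scale, and the second equality follows by taking $\frac{1}{n}\log$. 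The main ``obstacle'' is really just the convexity comparison between the threshold stated in the proposition and the natural one; the Chebyshev truncation itself is routine.
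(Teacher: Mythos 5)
Your proof is correct and follows exactly the paper's own route: the first equality is the preceding proposition with $S=\Sigma$, and the tail $\int_{\{r^n_R>n\tau\}}e^{\alpha r^n_R}d\mu_\phi$ is killed by the exponential Markov/Chebyshev inequality, writing $e^{\alpha r^n_R}=e^{(\alpha+\delta)r^n_R}e^{-\delta r^n_R}\le e^{(\alpha+\delta)r^n_R}e^{-\delta n\tau}$ and invoking $\alpha+\delta<\alpha(R)$.

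One remark on the threshold: you correctly derive that the Chebyshev bound needs $\tau>\frac{\Psi_R(\alpha+\delta)-\Psi_R(\alpha)}{\delta}$, and in fact the paper's own proof begins by choosing $\eps$ so that $-\delta\tau+\Psi_R(\alpha+\delta)+\eps\le\Psi_R(\alpha)-\eps$, i.e.\ it uses precisely this condition and not the one with $\Psi_R(\delta)$ appearing in the statement of the proposition (which is very likely a typo for $\Psi_R(\alpha)$). Your attempt to bridge the two via ``secant-slope monotonicity'' is not quite right as stated: since $\Psi_R$ is increasing with $\Psi_R(0)=0$, the inequality $\Psi_R(\alpha)\ge\Psi_R(\delta)$ holds only when $\alpha\ge\delta$; for $\delta>\alpha$ (which the hypotheses allow) the stated threshold would actually be weaker than the one the argument needs. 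So rather than reconciling the two thresholds by convexity, it is cleaner to observe that the working hypothesis in both proofs is $\tau>\frac{\Psi_R(\alpha+\delta)-\Psi_R(\alpha)}{\delta}$.
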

\begin{proof}
Take $\eps>0$ so small that $-\delta\tau+\Psi_R(\alpha+\delta)+\eps\le\Psi_R(\alpha)-\eps$.
Using Markov inequality we get 
\begin{eqnarray*}
   \int_{r^n_{R}>n\tau}e^{\alpha.r^n_{R}}d\mu_\phi&=&
\int_{r^n_{R}>n\tau}e^{(\alpha+\delta).r^n_{R}}e^{-\delta.r^n_{R}}d\mu_\phi\\
&\leq &e^{-\delta.n\tau}\int_{r^n_{R}>n\tau}e^{(\alpha+\delta).r^n_{R}}d\mu_\phi\\
&\leq & e^{-\delta.n\tau}\int e^{(\alpha+\delta).r^n_{R}}d\mu_\phi\\
&\leq & e^{-\delta.n\tau}e^{n.(\Psi_{R}(\alpha+\delta)+\eps)}\\
&=& o(e^{n.\Psi_{R}(\alpha)})
      \end{eqnarray*} 
\end{proof}

\section{Existence of inner and outer approximations, properties and consequences of their equality}\label{sec-exis-psiA}

In the first subsection we prove a monotonicity result about cumulant generating functions.
The idea is to approximate the set $A$ from the inside and from the outside by finite unions of cylinders, and show that the inner cumulant generating function $\Psiin$ and the outer cumulant generating function $\Psiout$ exist. 
Finally, we study the consequence of their equality on the cumulant generating function of the set $A$.


\subsection{Monotonicity of the rate function on rectangles} 

For $m$ an integer, let $\B_m$ be the biggest union of $m$-cylinders contained in $A$
and  $\C_m$ be the smallest union of $m$-cylinders which contains $A$.  
Then, we denote by $\CD_m$ the set $\CC_m\setminus\CB_m$ (See Figure~\ref{fig-bmacm}).
As any $(-m,m)$-cylinder is a union of $(-m-1,m+1)$-cylinders, we have $\CB_m\subset\CB_{m+1}\subset A\subset\CC_{m+1}\subset\CC_m$; Therefore $(\CD_m)$ is a decreasing sequence of compact set which converges to $\pA$. 

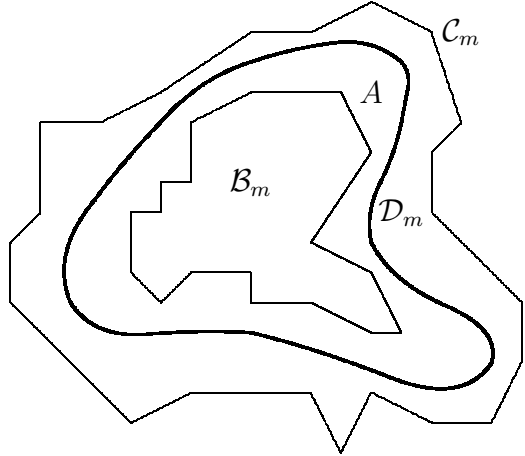
\begin{figure}\label{fig-bmacm}

\unitlength 0.8 mm
\begin{picture}(95,80)(0,0)
\linethickness{0.3mm}
\qbezier(50,70)(45.56,68.63)(41.82,66.07)
\qbezier(41.82,66.07)(38.07,63.51)(35,60)
\qbezier(35,60)(28.59,53.37)(22.77,45.43)
\qbezier(22.77,45.43)(16.95,37.49)(20,30)
\qbezier(20,30)(23.87,24)(32.8,24.74)
\qbezier(32.8,24.74)(41.73,25.47)(50,25)
\qbezier(50,25)(61.02,22.32)(73.11,17.56)
\qbezier(73.11,17.56)(85.21,12.8)(90,20)
\qbezier(90,20)(91.51,25.52)(82.4,29.46)
\qbezier(82.4,29.46)(73.3,33.4)(70,40)
\qbezier(70,40)(69,45.03)(71.47,49.89)
\qbezier(71.47,49.89)(73.94,54.75)(75,60)
\qbezier(75,60)(75.5,62.65)(76.04,65.4)
\qbezier(76.04,65.4)(76.59,68.15)(75,70)
\qbezier(75,70)(70.34,74.1)(63.37,73.14)
\qbezier(63.37,73.14)(56.41,72.19)(50,70)
\linethickness{0.1mm}
\multiput(25,60)(0.24,0.12){42}{\line(1,0){0.24}}
\multiput(35,65)(0.18,0.12){83}{\line(1,0){0.18}}
\put(50,75){\line(1,0){10}}
\multiput(60,75)(0.24,0.12){42}{\line(1,0){0.24}}
\multiput(70,80)(0.24,-0.12){42}{\line(1,0){0.24}}
\multiput(80,75)(0.12,-0.36){42}{\line(0,-1){0.36}}
\multiput(80,55)(0.12,0.12){42}{\line(1,0){0.12}}
\put(80,45){\line(0,1){10}}
\multiput(80,45)(0.12,-0.12){83}{\line(1,0){0.12}}
\multiput(90,35)(0.12,-0.12){42}{\line(1,0){0.12}}
\put(95,20){\line(0,1){10}}
\multiput(90,10)(0.12,0.24){42}{\line(0,1){0.24}}
\put(80,10){\line(1,0){10}}
\multiput(70,15)(0.24,-0.12){42}{\line(1,0){0.24}}
\multiput(65,5)(0.12,0.24){42}{\line(0,1){0.24}}
\multiput(60,15)(0.12,-0.24){42}{\line(0,-1){0.24}}
\put(45,15){\line(1,0){15}}
\put(40,15){\line(1,0){5}}
\multiput(30,10)(0.24,0.12){42}{\line(1,0){0.24}}
\multiput(25,15)(0.12,-0.12){42}{\line(1,0){0.12}}
\multiput(15,25)(0.12,-0.12){83}{\line(1,0){0.12}}
\multiput(10,30)(0.12,-0.12){42}{\line(1,0){0.12}}
\put(10,30){\line(0,1){10}}
\multiput(10,40)(0.12,0.12){42}{\line(1,0){0.12}}
\put(15,45){\line(0,1){15}}
\put(15,60){\line(1,0){10}}
\linethickness{0.1mm}
\put(30,40){\line(0,1){5}}
\put(30,45){\line(1,0){5}}
\put(35,45){\line(0,1){5}}
\put(35,50){\line(1,0){5}}
\put(40,50){\line(0,1){10}}
\multiput(40,60)(0.24,0.12){42}{\line(1,0){0.24}}
\qbezier(50,65)(50.95,65.01)(57.55,65.03)
\qbezier(57.55,65.03)(64.14,65.05)(65,65)
\qbezier(65,65)(65.24,64.33)(67.47,59.98)
\qbezier(67.47,59.98)(69.7,55.63)(70,55)
\qbezier(70,55)(69.36,54.05)(64.99,47.5)
\qbezier(64.99,47.5)(60.63,40.94)(60,40)
\qbezier(60,40)(60.62,39.69)(65,37.5)
\qbezier(65,37.5)(69.38,35.31)(70,35)
\qbezier(70,35)(70.31,34.37)(72.5,30)
\qbezier(72.5,30)(74.69,25.62)(75,25)
\qbezier(75,25)(74.69,25)(72.5,25)
\qbezier(72.5,25)(70.31,25)(70,25)
\qbezier(70,25)(69.37,25.31)(65,27.5)
\qbezier(65,27.5)(60.63,29.69)(60,30)
\qbezier(60,30)(59.37,30)(55,30)
\qbezier(55,30)(50.62,30)(50,30)
\put(50,30){\line(0,1){5}}
\put(40,35){\line(1,0){10}}
\multiput(35,30)(0.12,0.12){42}{\line(1,0){0.12}}
\multiput(30,35)(0.12,-0.12){42}{\line(1,0){0.12}}
\put(30,35){\line(0,1){5}}
\put(50,50){\makebox(0,0)[cc]{$\CB_m$}}

\put(70,65){\makebox(0,0)[cc]{$A$}}

\put(85,75){\makebox(0,0)[cc]{$\CC_m$}}

\put(75,45){\makebox(0,0)[cc]{$\CD_m$}}

\end{picture}
\caption{Inner and outer approximation of the set $A$ by $m$-cylinders.}
\end{figure}

Following what is done above, there exists two analytic functions $\Psi_{\B_m}$ and $\Psi_{\CC_m}$ respectively defined on $]-\8,\alpha(\CB_m)[$ and $]-\8,\alpha(\CC_m)[$. 
As it is said above, $\alpha(\CB_m)$ is the difference between $\CP_\phi(\S)$ and the topological $\phi$-pressure of the dotted system $\S_{\CB_m}$. In the same way, $\alpha(\CC_m)$ is the difference between $\CP_\phi(\S)$ and the topological $\phi$-pressure of the dotted system $\S_{\CC_m}$. Now, we clearly have $\S_{\CC_m}\subset\S_{\CB_m}$, because $\CC_m\supset\CB_m$. We also have $\CC_{m+1}\subset\CC_m$ and $\CB_m\subset\CB_{m+1}$. Therefore the sequence $(\alpha(\CB_m))_m$ is non-decreasing and the sequence $(\alpha(\CC_m))_m$ is non-increasing. Moreover, for any $m$, 
$$\alpha(\CB_m)\leq \alpha(\CC_m).$$
The sequence $(\alpha(\CB_m))_m$ is thus converging to some limit $\alpha_0<+\8$. Hence, for any $\alpha<\alpha_0$, and for any sufficiently large $m$, the functions $\Psi_{\CB_m}$ and $\Psi_{\CC_m}$ are real-analytic on $]-\8,\alpha[$. 

Let us define the lower and upper cumulant generating functions of $A$ by
\[
\ol\Psi_A(\alpha)=\liminf_{n\to\infty}\frac1n \log\int e^{\alpha r_A^n}d\mu_\phi
\quad\text{and}\quad
\ul\Psi_A(\alpha)=\limsup_{n\to\infty}\frac1n \log\int e^{\alpha r_A^n}d\mu_\phi.
\]

\begin{proposition}\label{prop-inegal-gd}
For any $0\leq \alpha<\alpha_0$ and for any sufficiently large $m$, we have 
$$\Psi_{\CC_m}(\alpha)\leq \ul\Psi_A(\alpha)\leq\ol\Psi_A(\alpha)\leq \Psi_{\CB_m}(\alpha).$$
For any $\alpha<0$ we have 
$$\Psi_{\CB_m}(\alpha)\leq \ul\Psi_A(\alpha)\leq\ol\Psi_A(\alpha)\leq \Psi_{\CC_m}(\alpha).$$
\end{proposition}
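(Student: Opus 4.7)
The proof plan is to lift the set inclusion $\CB_m\subset A\subset \CC_m$ first to a pointwise comparison of return times, then to inequalities of integrals, and finally to the level of cumulant generating functions by taking $\frac{1}{n}\log$ and passing to limits. The only substantive external input is the existence of $\Psi_{\CB_m}$ and $\Psi_{\CC_m}$ as genuine limits, which is the content of the cylinder-case theory recalled in Section~\ref{sec-CB-CC}.

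First I would observe that for any pair of sets $D\subset E$ and any $x$ at which the return times are finite (a $\mu_\phi$-full measure set by Poincar\'e recurrence), the $n$th return time is monotone non-increasing with respect to set inclusion: a visit to the larger set is achieved no later than a visit to the smaller one. Applied to the chain $\CB_m\subset A\subset \CC_m$, this gives the pointwise sandwich
\[
r_{\CC_m}^n(x)\le r_A^n(x)\le r_{\CB_m}^n(x) \qquad \mu_\phi\text{-a.e.}
\]
Next I would treat the case $\alpha\ge 0$: multiplying by $\alpha$ and exponentiating preserves the direction of the inequality, so after integrating against $\mu_\phi$ and applying $\frac{1}{n}\log$ I obtain the sandwich
\[
\frac{1}{n}\log\int e^{\alpha r_{\CC_m}^n}\,d\mu_\phi \le \frac{1}{n}\log\int e^{\alpha r_A^n}\,d\mu_\phi \le \frac{1}{n}\log\int e^{\alpha r_{\CB_m}^n}\,d\mu_\phi.
\]
Under the hypothesis $\alpha<\alpha_0$ with $m$ large enough, the condition $\alpha<\alpha(\CB_m)\le \alpha(\CC_m)$ holds, so by the previous subsection the outer two sequences converge to $\Psi_{\CC_m}(\alpha)$ and $\Psi_{\CB_m}(\alpha)$ respectively. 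Taking $\liminf_n$ and $\limsup_n$ in the middle term then yields
\[
\Psi_{\CC_m}(\alpha)\le \ul\Psi_A(\alpha)\le \ol\Psi_A(\alpha)\le \Psi_{\CB_m}(\alpha).
\]
For $\alpha<0$ the exponential is non-increasing, which reverses all inequalities; the same argument, valid for $m$ large enough that $\alpha(\CB_m)>\alpha$, then produces the reversed sandwich.

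There is really no obstacle in the argument; the proposition is essentially a sandwich lemma for a monotone sequence of approximations. What deserves emphasis, more than any difficulty, is the structural role of this result: if in addition we could show that $\Psi_{\CB_m}(\alpha)-\Psi_{\CC_m}(\alpha)\to 0$ as $m\to\infty$, then the sandwich would force $\ul\Psi_A(\alpha)=\ol\Psi_A(\alpha)$ and hence the existence of $\Psi_A(\alpha)$ as a true limit. This is precisely the foothold that the following subsections exploit, under the boundary assumptions of the theorem, in order to control the discrepancy between the inner and outer approximations and deduce the Large Deviation Principle.
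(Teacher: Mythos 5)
Your argument is exactly the one in the paper: the inclusion $\CB_m\subset A\subset\CC_m$ gives the pointwise sandwich $r_{\CC_m}^n\le r_A^n\le r_{\CB_m}^n$, and the two cases $\alpha\ge0$ and $\alpha<0$ follow by monotonicity of $t\mapsto e^{\alpha t}$, integration, and passage to the appropriate upper and lower limits. The only cosmetic difference is that for $\alpha<0$ the restriction to large $m$ is unnecessary, since $\alpha(\CB_m)>0>\alpha$ holds for every $m$; your statement is not wrong, just superfluous there.
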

\begin{proof}
The double inclusion $\CB_m\subset A\subset\CC_m$ implies that  $r^n_{\CB_m}\geq r^n_A\geq r^n_{\CC_m}$. The result follows then immediately by integration and taking the appropriate limits.
\end{proof}

\begin{remark}\label{rem1-ineg-psi}
Forgetting the set $A$ in the previous proof, we have in fact proved that if $\CB$ and $\CC$ are finite unions of cylinders satisfying $\CB\subset\CC$, then  for any $\alpha<0$,
\begin{equation}\label{equ3-inega-psi}
  \Psi_\CB(\alpha)\leq \Psi_\CC(\alpha),
\end{equation}
and for any $\alpha\geq 0$ (but sufficiently small such that the functions are well-defined)
\begin{equation}\label{equ4-inega-psi}
 \Psi_\CB(\alpha)\geq \Psi_\CC(\alpha).
\end{equation}
\end{remark}

\subsection{Existence of inner and outer cumulant generating functions}

Let us pick some $0<\alpha<\alpha_0$. By  (\ref{equ4-inega-psi}), the sequence of functions $(\Psi_{\CB_m})_m$ is a non-increasing sequence of non-decreasing convex functions on $[0,\alpha[$ (for sufficiently large $m$ !). It thus (simply) converges to some limit function ${\Psiin}$. This function ${\Psiin}$ has to be convex, thus continuous on $]0,\alpha[$. It also has to be non-decreasing, thus it must be continuous on $[0,\alpha[$. Moreover the Dini Theorem yields that the convergence is uniform on every compact set included in $[0,\alpha[$. This occurs for any $0<\alpha<\alpha_0$, thus the limit function ${\Psiin}$ is non-decreasing and continuous on $[0,\alpha_0[$ and the convergence is uniform on every compact set included in $[0,\alpha_0[$ .

In the same way sequence of functions $(\Psi_{\CC_m})_m$ is a non-decreasing sequence of non-decreasing convex functions on $[0,\alpha_0[$ (for any $m$ !). It thus (simply) converges to some limit function ${\Psiout}$; this function ${\Psiout}$ is convex and continuous on $]0,\alpha_0[$. Note that by (\ref{equ4-inega-psi}) we have 
$$0\leq {\Psiout}\leq {\Psiin}.$$
As $\Psi_{\CB_m}(0)=\Psi_{\CC_m}(0)=0$ for any $m$, ${\Psiout}$ is continuous on $[0,\alpha_0[$, and the convergence is uniform. 

We do the same work on $]-\8,0]$ using (\ref{equ3-inega-psi}) instead of (\ref{equ4-inega-psi}). Note that for $\alpha\leq 0$ we have 
$$0\geq {\Psiout}(\alpha)\geq {\Psiin}(\alpha).$$
The two functions ${\Psiin}$ and ${\Psiout}$ are convex and non-decreasing.  By proposition \ref{prop-inegal-gd} we have 
$${\Psiout}(\alpha)\leq \ul\Psi_A(\alpha)\leq \ol\Psi_A(\alpha)\leq {\Psiin}(\alpha) \mbox{ for }\alpha\geq 0,$$
$$\mbox{and }{\Psiin}(\alpha)\leq \ul\Psi_A(\alpha)\leq \ol\Psi_A(\alpha)\leq {\Psiout}(\alpha) \mbox{ for }\alpha\leq 0.$$

We emphasize that the existence of the limit $\Psi_A(\alpha)$ immediately follows from the equality $\Psiin(\alpha)=\Psiout(\alpha)$:

\begin{proposition}\label{pro-inegalout}
 If $\Psiin=\Psiout$ on an open interval then the cumulant generating function $\Psi_A$ exists, is equal to $\Psiin=\Psiout$ on this interval and it is a convex, continuous, non-decreasing function.
\end{proposition}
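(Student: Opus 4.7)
The proposition is essentially a packaging of the sandwich bounds established in Proposition~\ref{prop-inegal-gd} together with the regularity of $\Psiin$ and $\Psiout$ already proved in this subsection. The plan is to split by the sign of $\alpha$ and invoke these preliminary results.

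First, let $I$ denote the open interval on which $\Psiin=\Psiout$. Fix $\alpha\in I$ with $\alpha\ge 0$. By Proposition~\ref{prop-inegal-gd} applied for every sufficiently large $m$ and taking the appropriate limits in $m$, we have
$$
\Psiout(\alpha)\le \ul\Psi_A(\alpha)\le \ol\Psi_A(\alpha)\le \Psiin(\alpha).
$$
The hypothesis $\Psiin(\alpha)=\Psiout(\alpha)$ collapses this chain to a single value, so $\ul\Psi_A(\alpha)=\ol\Psi_A(\alpha)$. This means that the defining limit
$$
\Psi_A(\alpha)=\lim_{n\to\infty}\frac1n\log\int e^{\alpha r_A^n}\,d\mu_\phi
$$
exists and equals the common value $\Psiin(\alpha)=\Psiout(\alpha)$. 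The case $\alpha\le 0$ is identical, using the reversed sandwich
$$
\Psiin(\alpha)\le \ul\Psi_A(\alpha)\le \ol\Psi_A(\alpha)\le \Psiout(\alpha)
$$
recorded at the end of the previous subsection.

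Once the identification $\Psi_A=\Psiin=\Psiout$ on $I$ is in hand, the convexity, continuity and monotonicity claims require no new work: the previous subsection established that $\Psiin$ is a pointwise limit of convex, non-decreasing functions, hence convex and non-decreasing, hence continuous on the interior of its domain, and similarly for $\Psiout$. Since $\Psi_A$ coincides with these on the open interval $I$, it inherits all three properties.

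There is no real obstacle; the only minor bookkeeping point is that the direction of the sandwich flips at the origin. This causes no trouble because $\Psiin(0)=\Psiout(0)=0$, so the two sides of the argument glue continuously at $\alpha=0$ if $0\in I$; otherwise only one of the two sandwiches is needed.
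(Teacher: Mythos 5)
Your proof is correct and is essentially the paper's implicit argument: the sandwich inequalities from Proposition~\ref{prop-inegal-gd}, passed to the limit in $m$, pin $\ul\Psi_A$ and $\ol\Psi_A$ between $\Psiin$ and $\Psiout$ (with the sign of $\alpha$ dictating the direction), so equality of the latter forces existence of $\Psi_A$; the regularity of $\Psiin=\Psiout$ established just before then transfers to $\Psi_A$.
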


Note that if the cumulant generating function $\Psi_A$ exists on some open interval $(-\infty,\ol\alpha)$ and satisfies $\lim_{\alpha\to\ol\alpha}\Psi_A(\alpha)=+\infty$ then one gets the Large Deviation Principle and the formula~\eqref{equ-psidonnephi} holds.
In Sections~\ref{sec-proof-th2} and~\ref{sec-proof-th1} we will prove, under the assumption in point~\ref{point1} of the theorem, the existence of $\Psi_A$ on an interval $(-\infty,\ol\alpha)$ but we are not able to prove its maximality (the limit could be finite).

Nevertheless, using next proposition which exploits the symmetry of our assumption, this will be sufficient to get the existence of the rate function $\Phi_A$ on the whole interval.

\begin{proposition}\label{pro-psihalf}
Assume that the Large deviation principle for return times into $A^c$ holds for $u\in(0,\frac{1}{\mu(A^c)})$ with a continuous rate function $\Phi_{A^c}$. Then the Large Deviation Principle for return times into $A$ holds for $u>\frac{1}{\mu(A)}$ with a rate function $\Phi_A$ which satisfies
\[
\Phi_A(u)=\left(u-1\right) \Phi_{A^c}\left(\frac{u}{u-1}\right).
\]
In particular, if $\Phi_{A^c}$ is the Legendre transform of the cumulant generating function $\Psi_{A^c}$ then
\[
\Phi_A(u)=\inf_{\alpha<0} \left\{-\alpha u +(u-1)\Psi_{A^c}\left(\alpha\right)\right\}.
\]
\end{proposition}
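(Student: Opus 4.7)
The plan is to exploit a direct combinatorial duality between return times to $A$ and to $A^c$. For any integers $m\ge n\ge 1$, the number of visits of the orbit $\sigma x,\ldots,\sigma^m x$ to $A$ plus the number of visits to $A^c$ equals $m$, so one has the \emph{exact} identity of measurable sets
\[
\{r_A^n \ge m+1\} = \{r_{A^c}^{m-n+1} \le m\},
\]
without any approximation. This is the heart of the argument and immediately converts an upper large deviation for $r_A^n$ into a lower large deviation for $r_{A^c}$.

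Fix $u>1/\mu(A)$ and set $v:=u/(u-1)$. A short algebraic check (using $\mu(A)+\mu(A^c)=1$) shows that $u>1/\mu(A)$ is equivalent to $1<v<1/\mu(A^c)$, so $v$ lies in the range where the assumed LDP for $r_{A^c}$ applies. Take $m_n=\lceil un\rceil$ and $k_n=m_n-n+1$, so that $k_n/n\to u-1$ and $m_n/k_n\to v$. Using the identity above, $\mu\{r_A^n\ge m_n+1\}=\mu\{r_{A^c}^{k_n}\le m_n\}$, which I would rewrite as
\[
\mu\left\{\frac{r_{A^c}^{k_n}}{k_n}\le \frac{m_n}{k_n}\right\}.
\]
Since $m_n/k_n\to v$, a sandwich argument using the monotonicity of the event in the threshold together with the assumed LDP applied at $v\pm\varepsilon$ and the continuity of $\Phi_{A^c}$ yields
\[
\lim_n \frac{1}{k_n}\log \mu\{r_A^n\ge m_n+1\} = \Phi_{A^c}(v).
\]
Multiplying by $k_n/n\to u-1$ gives exactly $\Phi_A(u)=(u-1)\Phi_{A^c}(u/(u-1))$.

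For the Legendre transform formula, substitute $\Phi_{A^c}(v)=\inf_\alpha\{-\alpha v+\Psi_{A^c}(\alpha)\}$ into the previous line:
\[
\Phi_A(u)=(u-1)\inf_\alpha\left\{-\alpha\frac{u}{u-1}+\Psi_{A^c}(\alpha)\right\}=\inf_\alpha\left\{-\alpha u+(u-1)\Psi_{A^c}(\alpha)\right\}.
\]
To justify restricting the infimum to $\alpha<0$, note that $v<1/\mu(A^c)=\Psi_{A^c}'(0)$ (the derivative at $0$ is the asymptotic mean of $r_{A^c}^n/n$, by Kac), so by convexity of $\Psi_{A^c}$ any minimiser $\alpha^*$ of $-\alpha v+\Psi_{A^c}(\alpha)$ satisfies $\Psi_{A^c}'(\alpha^*)=v<\Psi_{A^c}'(0)$, forcing $\alpha^*<0$.

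The main obstacle I anticipate is not conceptual but purely technical: ensuring that the floor/ceiling and the $\pm 1$ shifts arising from the identity $\{r_A^n\ge m+1\}=\{r_{A^c}^{m-n+1}\le m\}$ do not spoil the limit. The continuity (and monotonicity in $v$) of $\Phi_{A^c}$ provided by the hypothesis is precisely what lets one absorb the $o(1)$ perturbations in both the threshold and the index, and apart from this bookkeeping the whole argument reduces to the single algebraic duality displayed above.
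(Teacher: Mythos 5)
Your proof is correct and uses essentially the same approach as the paper: the combinatorial duality that $r_A^n \ge m+1$ iff the complement $A^c$ received at least $m-n+1$ visits in the first $m$ steps, which converts an upper tail for $r_A^n$ into a lower tail for $r_{A^c}^{k_n}$ with $k_n/n\to u-1$ and threshold tending to $u/(u-1)$, absorbed by continuity of $\Phi_{A^c}$. The justification you add for restricting the infimum to $\alpha<0$ (convexity of $\Psi_{A^c}$ and $\Psi_{A^c}'(0)=1/\mu(A^c)>v$) is a small useful supplement that the paper leaves implicit.
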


\begin{proof}
Observe that if $u>\frac{1}{\mu(A)}\ge1$ then $r_A^n\ge nu$ 
if and only if the orbit entered at most $n$ times in $A$ before the time $\lfloor nu\rfloor$,
which means that the orbit entered at least $\lfloor n(u-1)\rfloor$ times into $A^c$ before the time $\lfloor nu\rfloor$. Therefore
\[
\frac1n\log\mu_\phi(r_A^n\ge nu) = \frac1n\log\mu_\phi\left(r_{A^c}^{\lfloor n(u-1)\rfloor}< {\lfloor n(u-1)\rfloor}\frac{\lfloor nu\rfloor}{\lfloor n(u-1)\rfloor}\right)
\]
and the result follows by taking the limit as $n\to\infty$.

\end{proof}


\section{Coincidence of inner and outer approximation in the case of a small pressure boundary}\label{sec-proof-th2}

The goal of this section is to prove the point~\ref{point2} of the theorem.
By the previous analysis (See Proposition~\ref{pro-inegalout}) it is sufficient to prove the existence of some interval $(\alpha_2,\alpha_1)\ni0$ 
such that for any $\alpha\in(\alpha_2,\alpha_1)$ we have $\Psiin(\alpha)=\Psiout(\alpha)$.

\subsection{A more explicit condition to get a small pressure boundary}

Let $K\subset\Sigma$ be a Borel set. We recall our definition of its $\phi$-pressure:
\[
\wt\CP_\phi(K)=\sup \left\{ h_\nu+\int \phi d\nu\colon \nu\text{ ergodic and }\nu(K)>0\right\}.
\]
Note that it is not the same as the one defined as a dimension like characteristic with forward cylinders. That would satisfy us in the case of expanding maps, but for diffeomorphisms it would lead to a condition much too strong. 
\begin{proposition}\label{pro-bdpress}
Let $K\subset\Sigma$ be a Borel set and let $V_n(K)$ be the smallest union of $(-n,n)$-cylinders which contains $K$.
If there exist some constants $c>0$ and $\theta>0$ such that $\mu_\phi( V_n(K) ) \le c e^{-\theta n}$ for all integers $n$, then $\wt\CP_\phi(K)\le \CP_\phi(\Sigma)-\frac12\theta$.
\end{proposition}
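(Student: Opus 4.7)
The plan is to exploit the Gibbs property of $\mu_\phi$ in order to convert the exponential decay of $\mu_\phi(V_n(K))$ into an upper bound on $h_\nu+\int\phi\,d\nu$ for any ergodic $\nu$ with $\nu(K)>0$. Set $P=\CP_\phi(\Sigma)$ and write $C_m(x)$ for the $(-m,m)$-cylinder of $x$. Since $\phi$ is H\"older and $\mu_\phi$ is its equilibrium state, there is a uniform constant $C_G$ such that
\[
C_G^{-1}\, e^{S_{2m+1}\phi(\sigma^{-m}x)-(2m+1)P} \le \mu_\phi(C_m(x)) \le C_G\, e^{S_{2m+1}\phi(\sigma^{-m}x)-(2m+1)P}
\]
holds for every $x$ and every $m$.

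For fixed $\eps>0$ I would introduce the ``good set'' $G_n(\eps)$ consisting of those $x$ for which, for every $m\ge n$,
\[
\nu(C_m(x))\le e^{-(2m+1)(h_\nu-\eps)} \quad\text{and}\quad S_{2m+1}\phi(\sigma^{-m}x)\ge (2m+1)\Bigl(\int\phi\,d\nu-\eps\Bigr).
\]
The two-sided Shannon--McMillan--Breiman theorem (obtained from the one-sided version via the $\sigma$-invariance identity $\nu(C_{-m,m}(x))=\nu(C_{0,2m}(\sigma^{-m}x))$), together with Birkhoff's theorem and Egorov's theorem, yield $\nu(G_n(\eps))\nearrow 1$ as $n\to\infty$; I would then fix $n$ large enough that $\nu(G_n(\eps)\cap K)\ge \nu(K)/2>0$.

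The core of the argument is then a double counting on cylinders. Let $\mathcal{N}_m$ be the family of $(-m,m)$-cylinders meeting $G_n(\eps)\cap K$ for $m\ge n$; every such cylinder lies in $V_m(K)$. On one hand, the disjointness of cylinders and the entropy bound on $G_n(\eps)$ yield
\[
\tfrac12\nu(K)\le \nu(G_n(\eps)\cap K)\le |\mathcal{N}_m|\, e^{-(2m+1)(h_\nu-\eps)}.
\]
On the other hand, picking in each $C\in\mathcal{N}_m$ a point of $G_n(\eps)$ and applying the Gibbs lower bound,
\[
\mu_\phi(V_m(K))\ge \sum_{C\in\mathcal{N}_m}\mu_\phi(C)\ge C_G^{-1}|\mathcal{N}_m|\, e^{(2m+1)(\int\phi\,d\nu-\eps-P)}.
\]
Combining the two with the hypothesis $\mu_\phi(V_m(K))\le c\,e^{-\theta m}$, taking logarithms, dividing by $m$ and sending $m\to\infty$, I obtain $-\theta\ge 2(h_\nu+\int\phi\,d\nu-2\eps-P)$. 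Letting $\eps\to 0$ and taking the supremum over ergodic $\nu$ with $\nu(K)>0$ then gives $\wt\CP_\phi(K)\le P-\theta/2$.

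I expect the main technical point to be the simultaneous uniformization, on a set of positive $\nu$-measure, of both the SMB and Birkhoff convergences so that the counting estimates are valid for \emph{all} sufficiently large $m$; this is routinely handled by Egorov's theorem. It is also precisely this step that explains the factor $\tfrac12$ in the conclusion: $V_m(K)$ is a union of cylinders of length $2m+1$, while the decay hypothesis sits at the scale $e^{-\theta m}$, so the length/decay mismatch forces $\theta$ to be divided by $2$.
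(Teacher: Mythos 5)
Your proof is correct. It rests on the same two pillars as the paper's — the Gibbs property of $\mu_\phi$ on $(-m,m)$-cylinders and the two-sided Shannon--McMillan--Breiman/Birkhoff estimates for an ergodic $\nu$ charging $K$ — but the way these are combined is genuinely different. The paper writes the decay hypothesis as the single integral bound
$c \ge \int_K \exp\bigl(\theta n + \log\tfrac{\mu_\phi(\wt Z_n^x)}{\nu(\wt Z_n^x)}\bigr)\,d\nu$ and then invokes Fatou's lemma to conclude that the almost-everywhere limit of the bracket (which is $\tfrac\theta2 - \CP_\phi(\Sigma) + h_\nu + \int\phi\,d\nu$, after correcting an evident sign typo in the paper's display) cannot be positive; the argument is essentially one computation and one application of Fatou. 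You instead uniformize the SMB and Birkhoff convergences on a ``good set'' of positive $\nu$-measure via Egorov, and then run a disjoint-cylinder counting argument: a lower bound on the number of cylinders meeting the good part of $K$ (from the entropy bound) against a lower bound on $\mu_\phi$ of each such cylinder (from Gibbs and Birkhoff), squeezed against $\mu_\phi(V_m(K))\le ce^{-\theta m}$. Your route is a bit longer but more elementary and makes the cylinder-counting mechanism and the origin of the factor $\tfrac12$ (the $2m+1$ versus $m$ scale mismatch) completely explicit, whereas the paper's Fatou argument is shorter and dispenses with any explicit counting. Both prove exactly the same estimate $h_\nu + \int\phi\,d\nu \le \CP_\phi(\Sigma) - \tfrac\theta2$ for each admissible $\nu$, hence the same conclusion after taking the supremum.
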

\begin{proof}
Let $\wt S_n\phi(x)=\sum_{k=-n}^{n-1}\phi(x)$ denote the two sided Birkhoff sum and $\wt Z_n^x$ the $(-n,n)$-cylinder containing the point $x\in\Sigma$.
Recall that since $\mu_\phi$ is a Gibbs measure, for some constant $b>0$ and for every $x\in\Sigma$ we have
\[
e^{-b} \le \frac{\mu_\phi(\wt Z_n^x)}{\exp\left(\wt S_n\phi(x)-2n\CP_\phi(\Sigma)\right)} \le e^b.
\]
Let $\nu$ be an ergodic measure such that $\nu(K)>0$. We have
\[
\begin{split}
c &\ge e^{\theta n} \mu_\phi(V_n(K)) \\
& \ge 
\int_K \exp\left(\theta n+\log\frac{\mu_\phi(\wt Z_n^x)}{\nu(\wt Z_n^x)}\right)d\nu(x) \\
&=\int_K \exp\left( 2n\left[\frac\theta2+\wt\CP_\phi(\Sigma)+\frac1{2n}\wt S_n\phi(x)-\frac1{2n}\log\nu(\wt Z_n^x )\right]\right) d\nu(x).
\end{split}
\]
The Shannon-McMillan-Breiman theorem and the ergodic theorem implies the convergence $\nu$-a.e. of the term into square bracket to the value 
\[
\frac\theta2+\wt\CP_\phi(\Sigma)+h_\nu+\int\phi d\nu,
\]
which cannot be positive according to Fatou's lemma.
\end{proof}

\begin{proof}[Proof of Proposition~\ref{pro-bdpress-mfd}]
Take a Markov partition of sufficiently small diameter and denote by $\pi\colon \Sigma\to M$ the semi-conjugacy. We know that the diameter of the image by $\pi$ of a $(-n,n)$-cylinder goes uniformly to zero at an exponential rate. Thus, setting $A=\pi^{-1}V$ we get, since $\pA\subset \pi^{-1}\partial V$ that the $(-n,n)$-cylindrical neighborhood of $\pA$ has a measure exponentially small, thus Proposition~\ref{pro-bdpress} applies.
\end{proof}
\subsection{Coincidence for positive values of $\alpha$}

Lemma \ref{lem-carac-mus} characterizes $\Psi_{\CB_m}$ and $\Psi_{\CC_m}$: as soon as $\Psi_{\CB_m}(\alpha)$ is defined, $\Psi_{\CB_m}(\alpha)$ is the unique real number $t=t(\alpha, m)$ such that the topological pressure associated to the potential $\phi-t\BBone_{\CB_m}$, $\CP_{\phi-t\BBone_{\CB_m}}$, equals $\CP_\phi(\S)-\alpha$.

Similarly, $\Psi_{\CC_m}(\alpha)$ is the unique real number $t=t(\alpha, m)$ such that the topological pressure associated to the potential $\phi-t\BBone_{\CC_m}$, $\CP_{\phi-t\BBone_{\CC_m}}$, equals $\CP_\phi(\S)-\alpha$.

Let us pick some $\alpha>0$. We denote by $m_{\CB_m,\alpha}$ the measure $m_S$ obtained when we have $R=\CB_m$ and $S=\CP_{\phi}(\S)-\alpha$ in subsection \ref{subsec-thermo-loc}. This measure is the unique equilibrium state associated to the potential $\phi-\Psi_{\CB_m}(\alpha).\BBone_{\CB_m}$. The measure weights $\CB_m$, hence $\CC_m$ and we can take the induced measure on $\CC_m$. Therefore we have (omitting the subscribe $m$ for convenience)
\begin{eqnarray*}
h_{m_{\CB,\alpha}}(f)+\int\phi-\Psi_\CB(\alpha)\,dm_{\CB,\alpha}&=&\CP_\phi(\S)-\alpha\\
h_{m_{\CB,\alpha}}(f)+\int\phi-(\CP_\phi(\S)-\alpha)\,dm_{\CB,\alpha}&=&m_{\CB,\alpha}(\CB)\Psi_\CB(\alpha)\\
m_{\CB,\alpha}(\CC)\left(h_{\mu_{\CB,\alpha,\CC}}(g_\CC)+\int S_{r_C}(\phi-(\CP_\phi(\S)-\alpha))d\mu_{\CB,\alpha,\CC}\right)&=&m_{\CB,\alpha}(\CB)\Psi_\CB(\alpha),
\end{eqnarray*}
where $\mu_{\CB,\alpha,\CC}$ is the conditional measure $m_{\CB,\alpha}{}_{|\CC}$
and $g_\CC$ is the first return map on $\CC$.
This measure has a pressure in $\CC$ lower than $\Psi_\CC(\alpha)$; we thus get 
\begin{equation}\label{equ-mino-psisss}
\frac{m_{\CB_m,\alpha}(\CB_m)}{m_{\CB_m,\alpha}(\CC_m)}\Psi_{\CB_m}(\alpha)\leq \Psi_{\CC_m}(\alpha).
\end{equation}

Recall that for positive $\alpha$, $0<\Psi_{\CC_m}(\alpha)\le\Psi_{\CB_m}(\alpha)$ and are upper bounded (uniformly in every compact set in $[0,\alpha_0[$). 

\begin{proposition}\label{prop-keyalpha1}
There exists some $\alpha_1>0$ such that for every $\alpha\in(0,\alpha_1)$,
$$\lim_{m\rightarrow\pinf}\frac{m_{\CB_m,\alpha}(\CD_m)}{m_{\CB_m,\alpha}(\CC_m)}=0.$$
In particular, $\Psiin=\Psiout$ on this interval.
\end{proposition}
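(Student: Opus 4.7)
The strategy is to prove directly that $m_{\CB_m,\alpha}(\CD_m)\to0$ while the denominator $m_{\CB_m,\alpha}(\CC_m)$ stays uniformly bounded below; once both are established, the ratio tends to zero, and the equality $\Psiin=\Psiout$ on $(0,\alpha_1)$ follows from \eqref{equ-mino-psisss} combined with the reverse inequality $\Psiin\ge\Psiout$ already known from Proposition \ref{prop-inegal-gd}.

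The first observation is that the hypothesis $\wt\CP_\phi(\pA)<\CP_\phi(\S)$ forces $\mu_\phi(\pA)=0$: since $\mu_\phi$ is ergodic with $h_{\mu_\phi}+\int\phi\,d\mu_\phi=\CP_\phi(\S)$, a positive value $\mu_\phi(\pA)>0$ would make $\mu_\phi$ a competitor in the definition of $\wt\CP_\phi(\pA)$ and yield $\wt\CP_\phi(\pA)\ge\CP_\phi(\S)$, a contradiction. Setting $\delta_0:=\CP_\phi(\S)-\wt\CP_\phi(\pA)>0$, we deduce $\mu_\phi(\CB_m)\to\mu_\phi(A)>0$, $\mu_\phi(\CC_m)\to\mu_\phi(A)$, and $\mu_\phi(\CD_m)\to0$. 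In fact, by running the argument of Proposition \ref{pro-bdpress} in the reverse direction and using that $\CD_m$ is a $(-m,m)$-cylindrical neighborhood of $\pA$, one gets geometric decay $\mu_\phi(\CD_m)\le c\,e^{-\theta m}$ for some $\theta>0$.

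The main step uses the local thermodynamic formalism of Section \ref{subsec-thermo-loc}: the measure $m_{\CB_m,\alpha}$ is the opened-out version of the measure $\wh\mu_{\CB_m,\alpha}$ on $\CB_m$ built from the transfer operator $\CL_{\CP_\phi(\S)-\alpha}$, whose spectral properties \eqref{eq1-trouspect-perron} allow explicit estimates. Expressing $m_{\CB_m,\alpha}(\CD_m)$ as an expectation of the number of visits to $\CD_m$ during an excursion from $\CB_m$, and comparing term by term with the corresponding expression at $\alpha=0$ (where $m_{\CB_m,0}=\mu_\phi$), one obtains a uniform estimate of the form $m_{\CB_m,\alpha}(\CD_m)\le C\,\mu_\phi(\CD_m)$ together with a matching lower bound $m_{\CB_m,\alpha}(\CB_m)\ge C^{-1}\mu_\phi(\CB_m)$, valid for $m$ large and $\alpha\in(0,\alpha_1)$ with $\alpha_1>0$ independent of $m$. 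By the first step, the numerator then tends to zero while the denominator stays above some positive constant, so the ratio tends to zero.

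The main obstacle is the uniformity in $m$ of the spectral estimates for $\CL_{\CP_\phi(\S)-\alpha}$: the constants $C_\phi$ and $\eps_S$ in \eqref{eq1-trouspect-perron}, as well as the $L^\infty$ norm of the eigenfunction $H_S$, a priori depend on the inducing set $\CB_m$. Proving that they can be chosen uniformly in $m$ large is the analytically delicate part; it relies on the monotonicity of the approximating family $(\CB_m)$ and on the fact that the induced dynamics on $\CB_m$ becomes essentially stable for all large $m$, so that the spectral perturbation in $S=\CP_\phi(\S)-\alpha$ remains controlled for $\alpha\in(0,\alpha_1)$ with $\alpha_1$ sufficiently small.
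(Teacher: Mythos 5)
Your proposal takes a genuinely different route from the paper, but the central step is asserted rather than proved, and the difficulty you defer to the last paragraph is exactly where the argument is likely to break.

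The paper's proof never uses uniform spectral estimates at all. It splits the proposition into Lemmas~\ref{lem-meslimcposi} and~\ref{lem-mesbordnul}, and both are proved by a \emph{soft} compactness argument: pass to a weak-$*$ accumulation point $\nu$ of the sequence $m_{\CB_m,\alpha}$, use upper semi-continuity and affinity of entropy in the equality \eqref{equ-press}, and reach a contradiction via the variational principle. In Lemma~\ref{lem-mesbordnul} the pressure-gap hypothesis $\wt\CP_\phi(\pA)<\CP_\phi(\S)$ enters \emph{quantitatively}: after decomposing $\nu=p\nu_0+q\nu_1$ over the invariant hull of $\pA$, the gap gives the estimate \eqref{equ2-bordpetitepression} on the $\nu_0$-component, and $\alpha_1$ is precisely $\min(\CP_\phi(\S)-\wt\CP_\phi(\pA),\alpha_0)$. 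Your proposal, by contrast, only uses the hypothesis to conclude $\mu_\phi(\pA)=0$, which is strictly weaker and does not by itself yield any $\alpha_1>0$.

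There are two concrete gaps in your argument. First, the claimed exponential decay $\mu_\phi(\CD_m)\le c\,e^{-\theta m}$ is the \emph{converse} of Proposition~\ref{pro-bdpress}; the paper neither proves nor uses this converse, and it is not obviously true that a pressure gap forces geometric decay of the cylindrical neighborhoods. Second, and more seriously, the uniform comparison $m_{\CB_m,\alpha}(\CD_m)\le C\,\mu_\phi(\CD_m)$ is the crux, and there is a real obstruction to it. The measure $m_{\CB_m,\alpha}$ is Gibbs for $\phi-\Psi_{\CB_m}(\alpha)\BBone_{\CB_m}$ with pressure $\CP_\phi(\S)-\alpha$, while $\mu_\phi$ is Gibbs for $\phi$ with pressure $\CP_\phi(\S)$. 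On a $(-m,m)$-cylinder $Z\subset\CD_m$ this gives
\[
\frac{m_{\CB_m,\alpha}(Z)}{\mu_\phi(Z)}\asymp \exp\Bigl(2m\alpha-\Psi_{\CB_m}(\alpha)\,\wt S_m(\BBone_{\CB_m})\Bigr),
\]
which can be as large as $e^{2m\alpha}$ for cylinders whose central $2m$ coordinates avoid $\CB_m$ --- i.e.\ precisely for orbit segments that linger near $\pA$. Controlling the total $\mu_\phi$-mass of such ``bad'' cylinders is exactly the content of the pressure-gap hypothesis, so your Step~3 cannot be separated from it, and the Gibbs constants themselves depend on $\CB_m$ through the induced potential. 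You flag this yourself as ``the analytically delicate part'', but nothing in the proposal addresses it; as written, the argument does not close. The paper's accumulation-point argument sidesteps all uniformity issues and is the correct route here.
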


The proof of the proposition is an immediate consequence of these two lemmas and Inequality~\eqref{equ-mino-psisss}.
\begin{lemma}\label{lem-meslimcposi}
For any $\alpha\in(0,\alpha_0)$ we have $\disp\liminf_{m\rightarrow\pinf}{m_{\CB_m,\alpha}(\CC_m)}>0$
\end{lemma}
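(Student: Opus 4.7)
My plan is to bound $m_{\CB_m,\alpha}(\CB_m)$ from below uniformly in $m$, which implies the lemma since the inclusion $\CC_m\supset\CB_m$ gives $m_{\CB_m,\alpha}(\CC_m)\geq m_{\CB_m,\alpha}(\CB_m)$.

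The key identity I would establish is
\[
m_{\CB_m,\alpha}(\CB_m) = \frac{1}{\Psi_{\CB_m}'(\alpha)}.
\]
By Kac's formula, since $\wh{\mu}_{\CB_m,\alpha}$ is the conditional measure $m_{\CB_m,\alpha}(\cdot\mid\CB_m)$ (Subsection~\ref{subsec-thermo-loc}), one has $m_{\CB_m,\alpha}(\CB_m) = 1/\int r_{\CB_m}\,d\wh\mu_{\CB_m,\alpha}$. On the other hand, $\Psi_{\CB_m}(\alpha)=\log\lambda_{\CP_\phi(\S)-\alpha}$, where $\log\lambda_S$ is the pressure on the induced system of the potential $S_{r_{\CB_m}}(\phi)-S\cdot r_{\CB_m}$, whose equilibrium state is $\wh\mu_{\CB_m,\alpha}$. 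The derivative-of-pressure formula (equivalently, the envelope theorem applied to the variational characterization in Lemma~\ref{lem-carac-mus}) yields $\frac{d}{dS}\log\lambda_S = -\int r_{\CB_m}\,d\wh\mu_{\CB_m,\alpha}$, whence $\Psi_{\CB_m}'(\alpha) = \int r_{\CB_m}\,d\wh\mu_{\CB_m,\alpha}$. Combining the two expressions gives the claimed identity.

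It then remains to bound $\Psi_{\CB_m}'(\alpha)$ uniformly in $m$ by exploiting convexity. Pick $h>0$ with $\alpha+h<\alpha_0$. By convexity of $\Psi_{\CB_m}$ together with $\Psi_{\CB_m}(\alpha)\geq 0$,
\[
\Psi_{\CB_m}'(\alpha)\leq\frac{\Psi_{\CB_m}(\alpha+h)-\Psi_{\CB_m}(\alpha)}{h}\leq\frac{\Psi_{\CB_m}(\alpha+h)}{h}.
\]
Since $(\Psi_{\CB_m})_m$ is non-increasing on $[0,\alpha_0[$ by~\eqref{equ4-inega-psi}, for some (any sufficiently large) $m_0$ one has $\Psi_{\CB_m}(\alpha+h)\leq\Psi_{\CB_{m_0}}(\alpha+h)=:hC$ for every $m\geq m_0$. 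Therefore $m_{\CB_m,\alpha}(\CB_m)\geq 1/C>0$ for all $m\geq m_0$, establishing the uniform lower bound and hence the lemma.

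The main technical point is justifying the identity $\Psi_{\CB_m}'(\alpha)=\int r_{\CB_m}\,d\wh{\mu}_{\CB_m,\alpha}$, which requires differentiating the leading eigenvalue of a transfer operator with unbounded inducing time $r_{\CB_m}$. This is standard once one invokes the real-analyticity of $S\mapsto\log\lambda_S$ recalled in Section~\ref{sec-CB-CC}, together with the spectral-gap formula~\eqref{eq1-trouspect-perron}; the convexity argument that follows is then elementary.
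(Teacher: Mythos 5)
Your proof is correct, but it takes a genuinely different route from the paper. The paper argues indirectly: it takes an accumulation point $\nu$ of the sequence $m_{\CB_m,\alpha}$, uses the upper semi-continuity of entropy together with the equilibrium-state identity~\eqref{equ-press} to get $\CP_\nu(\phi)\geq\CP_\phi(\S)-\alpha$, and observes that if $\nu(\CB_j)=0$ for every $j$ then $\nu$ would live in every dotted system $\S_{\CB_j}$, forcing $\CP_\nu(\phi)\leq\CP_\phi(\S)-\alpha_0<\CP_\phi(\S)-\alpha$, a contradiction; since $\BBone_{\CB_j}$ is continuous this transfers back to a uniform positive lower bound on $\mu_m(\CB_m)$ for large $m$. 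Your proof instead uses the Abramov/Kac identity $m_{\CB_m,\alpha}(\CB_m)=1/\int r_{\CB_m}\,d\wh\mu_{\CB_m,\alpha}$ and the derivative-of-pressure formula to identify this return-time integral with $\Psi_{\CB_m}'(\alpha)$, then controls the derivative by convexity and the monotonicity in $m$ from Remark~\ref{rem1-ineg-psi}. Both are sound; one small point you should make explicit is that the one-sided difference quotient bound needs $\alpha+h<\alpha(\CB_m)$ so that $\Psi_{\CB_m}(\alpha+h)$ is defined, which holds for all $m$ large since $\alpha(\CB_m)\uparrow\alpha_0$, and this is compatible with the $\liminf$ statement. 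Your approach buys a clean, explicit, quantitative lower bound $1/C$ from thermodynamic calculus, at the modest cost of invoking differentiability of $S\mapsto\log\lambda_S$ (which the paper has already established). The paper's accumulation-point/semi-continuity argument is less direct here, but it is the scaffolding that is reused verbatim in Lemma~\ref{lem-mesbordnul} to show $\nu(\pA)=0$ --- which is the heart of the section --- so the authors' choice is more economical in context, even if more roundabout for this particular lemma.
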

\begin{proof}
Let us denote by $\mu_m$ the measure $m_{\CB_m,\alpha}$, and pick any accumulation point $\nu$ of $(\mu_m)$ such that $\mu_m(\CB_m)$ converges (up to the correct subsequence) to $L:=\disp\liminf_{m\rightarrow\pinf}{\mu_m(\CB_m)}$. Let us show that $L>0$ whenever $\alpha<\alpha_0$.

Since $\mu_m$ is an equilibrium state we have 
\begin{equation}
h_{\mu_m}+\int \phi\,d\mu_m-\Psi_{\CB_m}(\alpha)\mu_m(\CB_m)=\CP_\phi(\S)-\alpha.
\label{equ-press}
\end{equation}
By semi-continuity for the metric entropy and the continuity of $\phi$ 
we obtain 
\begin{equation}
\CP_\nu(\phi)=h_\nu+\int\phi d\nu
\geq \CP_\phi(\S)-\alpha+\Psiin(\alpha)L.\label{equ-minopress}
\end{equation}
If $\nu(\CB_j)=0$ then this yields that $\nu$ is a $\s$-invariant measure for the dotted system $\S_{\CB_j}$. Hence, its $\phi$-pressure must be smaller than $\CP_\phi(\S_{\CB_j})$, which is by definition $\CP_\phi(\S)-\alpha(\CB_j)$. If this holds for every $j$ then $\CP_\nu(\phi)\leq \CP_\phi(\S)-\alpha_0$ (remember that $(\alpha(\CB_j))$ converges to $\alpha_0$). 
On the other hand by \eqref{equ-minopress} we had $\CP_\nu(\phi)\geq \CP_\phi(\S)-\alpha$, and $\alpha<\alpha_0$. This yields a contradiction. Therefore $\nu(\CB_j)>0$ for some $j$. Additionally, whenever $m\ge j$ we get $\mu_m(\CB_m)\ge\mu_m(\CB_j)$, and the later converges to $\nu(\CB_j)$ by continuity of $\BBone_{\CB_j}$.
This achieves the proof of the lemma since $\CC_m\supset \CB_m$ for any $m$.
\end{proof}

\begin{lemma}\label{lem-mesbordnul}
Let us set $\alpha_1:=\min(\CP_\phi(\S)-\wt\CP_\phi(\pA),\alpha_0)>0$. For every $\alpha\in(0,\alpha_1)$ we have $\disp\lim_{m\rightarrow\pinf}{m_{\CB_m,\alpha}(\CD_m)}=0$.
\end{lemma}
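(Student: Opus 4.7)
The plan is to take a subsequential weak-$\ast$ limit $\nu$ of $\mu_m:=m_{\CB_m,\alpha}$ along a subsequence on which $\mu_m(\CD_m)\to L:=\limsup_m \mu_m(\CD_m)$ and $\mu_m(\CB_m)\to L'$, and then to show $\nu(\pA)=0$. Because $\CD_m$ is a finite union of cylinders (hence clopen) and $\CD_m\searrow \pA$, the inequality $\mu_m(\CD_m)\le \mu_m(\CD_j)$ for $m\ge j$ passes to the weak limit to yield $L\le \nu(\CD_j)$, and letting $j\to\8$ with continuity from above gives $L\le \nu(\pA)$; symmetrically $\CB_m\nearrow A$ produces $L'\ge \nu(A)$. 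It then suffices to establish $\nu(\pA)=0$.

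The key abstract tool is the ``limit variational principle''
\[
h_\lambda+\int\phi\,d\lambda - \Psiin(\alpha)\lambda(A) \le \CP_\phi(\S)-\alpha
\]
valid for every $\s$-invariant probability $\lambda$. I would obtain it from Lemma \ref{lem-carac-mus}: the variational principle for the equilibrium state $m_{\CB_m,\alpha}$ gives $h_\lambda+\int\phi\,d\lambda - \Psi_{\CB_m}(\alpha)\lambda(\CB_m) \le \CP_\phi(\S)-\alpha$, and letting $m\to\8$ with $\Psi_{\CB_m}(\alpha)\to\Psiin(\alpha)$ and $\lambda(\CB_m)\nearrow\lambda(A)$ (monotone convergence) finishes the bound. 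Applied at $\lambda=\nu$ this yields the upper bound $h_\nu+\int\phi\,d\nu \le \CP_\phi(\S)-\alpha+\Psiin(\alpha)\nu(A)$, while upper semi-continuity of the entropy, applied to the equilibrium identity $h_{\mu_m}+\int\phi\,d\mu_m=\CP_\phi(\S)-\alpha+\Psi_{\CB_m}(\alpha)\mu_m(\CB_m)$, gives the matching lower bound $h_\nu+\int\phi\,d\nu \ge \CP_\phi(\S)-\alpha+\Psiin(\alpha)L'$. Together with $L'\ge \nu(A)$, everything pinches to equality, forcing $L'=\nu(A)$ and identifying $\nu$ as an equilibrium state for the (discontinuous) potential $\phi-\Psiin(\alpha)\BBone_A$ with pressure $\CP_\phi(\S)-\alpha$.

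To conclude I would invoke the ergodic decomposition $\nu=\int\nu_\omega\,d\rho(\omega)$. Since entropy and all the integrals are affine on this decomposition and each $\nu_\omega$ satisfies the same upper bound, the equality forces $\rho$-a.e.\ ergodic component $\nu_\omega$ to also satisfy $h_{\nu_\omega}+\int\phi\,d\nu_\omega=\CP_\phi(\S)-\alpha+\Psiin(\alpha)\nu_\omega(A)$. If such a component had $\nu_\omega(\pA)>0$, the very definition of $\wt\CP_\phi(\pA)$ would give $h_{\nu_\omega}+\int\phi\,d\nu_\omega\le \wt\CP_\phi(\pA)=\CP_\phi(\S)-\beta$ with $\beta:=\CP_\phi(\S)-\wt\CP_\phi(\pA)\ge \alpha_1$, so that $\Psiin(\alpha)\nu_\omega(A)\le \alpha-\beta<0$; since $\Psiin(\alpha)>0$ and $\nu_\omega(A)\ge 0$, this is a contradiction for $\alpha<\alpha_1$. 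Hence $\nu_\omega(\pA)=0$ $\rho$-a.e., so $\nu(\pA)=0$ and therefore $L=0$.

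The main technical obstacle is the discontinuity of $\BBone_A$: the weak limit $\nu$ is not a priori an equilibrium for the ``limit'' potential since the equilibrium identity does not survive naive weak convergence. Trapping $\BBone_A$ between the clopen indicators $\BBone_{\CB_m}$ and using the monotone convergence $\CB_m\nearrow A$ is precisely what makes the sandwich argument pinch, promoting $\nu$ to a true equilibrium and unlocking the ergodic-decomposition step.
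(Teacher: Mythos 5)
Your proof is correct, and it takes a genuinely different route from the paper's. The paper also extracts a weak-$\ast$ accumulation point $\nu$ and shows $\nu(\pA)=0$, but it does so by decomposing $\nu=p\nu_0+q\nu_1$ according to the \emph{invariant hull} $H=\bigcup_n\s^{-n}\pA$, bounding $h_{\nu_0}+\int\phi\,d\nu_0$ by $\wt\CP_\phi(\pA)$, and then deriving a contradiction by showing that for large $j$ the measure $\nu_1$ would have a $\bigl(\phi-\Psi_{\CB_j}(\alpha)\BBone_{\CB_j}\bigr)$-pressure strictly exceeding $\CP_\phi(\S)-\alpha$, contradicting Lemma~\ref{lem-carac-mus}. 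You instead first isolate a reusable ``limit variational principle'' $h_\lambda+\int\phi\,d\lambda-\Psiin(\alpha)\lambda(A)\le\CP_\phi(\S)-\alpha$ valid for every invariant $\lambda$, then pinch it to equality at $\nu$ using the reverse bound from upper semi-continuity, and finally rule out $\nu(\pA)>0$ via the full ergodic decomposition and the $\wt\CP_\phi(\pA)$ bound on components. The two arguments use the same ingredients (Gibbs/equilibrium variational inequality, semi-continuity and affinity of entropy, continuity of cylinder indicators, the definition of $\wt\CP_\phi$), but your version is more modular: it makes explicit the fact --- only implicit in the paper --- that $\nu$ is an equilibrium state for the discontinuous potential $\phi-\Psiin(\alpha)\BBone_A$, which is a nice structural statement. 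The paper's argument avoids invoking the general ergodic decomposition and works throughout with the clopen sets $\CB_j$ and the finite-$j$ functions $\Psi_{\CB_j}$, trading modularity for a more ``hands-on'' contradiction. One minor remark: in your pinch you should note that $\Psiin(\alpha)>0$ for $\alpha\in(0,\alpha_1)$ (which holds because $\Psiin\ge\Psiout\ge\Psi_{\CC_m}>0$ on $(0,\alpha_0)$); you use this both to force $L'=\nu(A)$ and in the final sign argument, and it deserves an explicit word.
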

\begin{proof}
Let us fix some $\alpha\in(0,\alpha_1)$.
Let us pick any accumulation point $\nu$ for the sequence of measures $\mu_m$ (we keep the notation of the preceding lemma). We claim that $\nu(\pA)=0$.

Assume for a contradiction that $\nu(\pA)>0$. Then let $H=\cup_{n\in\mathbb{Z}}f^{-n}\pA$ be the invariant hull of $\pA$.
Let $\nu_0$ and $\nu_1$ be the conditional measures of $\nu$ on $H$ and $\Sigma\setminus H$.
These two invariant probabilities are such that $\nu=p\nu_0+q\nu_1$ for some $p>0$.
Observe that by definition, any ergodic component of $\nu_0$ gives mass to $H$. Therefore even if $\nu_0$ is not ergodic, since the entropy is affine we still get that  
\begin{equation}\label{equ2-bordpetitepression}
 h_{\nu_0}+\int\phi\,d\nu_0\leq \wt\CP_\phi(\pA) = \CP_\phi(\S)-\alpha_1.
\end{equation}

Copying the equality (\ref{equ-press}), we get for every integers $m\ge j$ that
\[
h_{\mu_m}+\int \phi d\mu_m -\Psi_{\CB_m}(\alpha)\mu_m(\CB_j) \ge \CP_\phi(\Sigma)-\alpha.
\]
Thus letting $m\to\infty$ gives, since the entropy is semi-continuous and affine, 
\begin{equation}\label{equ3-bordpetitepression}
p\left(h_{\nu_0}+\int\phi\,d\nu_0-\Psiin(\alpha)\nu_0(\CB_j)\right)+q\left(h_{\nu_1}+\int\phi\,d\nu_1-\Psiin(\alpha)\nu_1(\CB_j)\right)\geq \CP_\phi(\S)-\alpha,
\end{equation}
Hence (\ref{equ2-bordpetitepression}) and (\ref{equ3-bordpetitepression}) yield that for every $j$
$$h_{\nu_1}+\int\phi\,d\nu_1-\Psiin(\alpha)\nu_1(\CB_j)\geq \CP_\phi(\S)-\alpha+\frac{p}{q}(\alpha_1-\alpha).$$
We now choose $j$ large enough such that 
$$h_{\nu_1}+\int\phi\,d\nu_1-\Psi_{\CB_j}(\alpha)\nu_1(\CB_j)> \CP_\phi(\S)-\alpha$$
holds. This is a contradiction because the measure $\nu_1$ would have a $\phi-\Psi_{\CB_j}\BBone_{\CB_j}$-pressure strictly larger than the associated equilibrium state. Thus we have $\nu(\pA)=0$.

To finish the proof let us fix some $\eps>0$ and consider any $j$ such  that $\nu(\CD_j)<\eps$. Such an integer $j$ exists by outer regularity of the measure $\nu$ and because $\pA=\disp\bigcap{\downarrow}\CD_n$. Note that $\BBone_{\CD_j}$ is continuous. Now, for any $m\geq j$ we have $\CD_m\subset\CD_j$, and then we get
$$0\leq \limsup_m \mu_m(\CD_m)\leq \nu(\CD_j)<\eps.$$
This holds for every positive $\eps$, which proves the lemma.
\end{proof}

\begin{remark}\label{rem-alpha1}
We remark that under the assumption in point~\ref{point1} of the theorem, we always have $\nu(A)=0$ for the measure $\nu$ constructed in Lemma~\ref{lem-mesbordnul},
therefore $\alpha_1=\alpha_0$.
\end{remark}

\subsection{Coincidence for negative values of $\alpha$}

We remark that the measure $m_{\CC,\alpha}$ is a Gibbs measure with full topological support, thus it gives weight to $\CB$. Therefore we can copy the case $\alpha$ positive  and induce on $\CB$ (instead of $\CC$); we get similarly
\begin{equation}\label{equ-mino-psisss2}
\frac{m_{\CC_m,\alpha}(\CC_m)}{m_{\CC_m,\alpha}(\CB_m)}\Psi_{\CC_m}(\alpha)\leq \Psi_{\CB_m}(\alpha).
\end{equation}

\begin{proposition}\label{prop-keyalpha12}
There exists some real $\alpha_2<0$ such that for every $\alpha\in(\alpha_2,0)$,
$$\lim_{m\rightarrow\pinf}\frac{m_{\CC_m,\alpha}(\CD_m)}{m_{\CC_m,\alpha}(\CC_m)}=0.$$
In particular, $\Psiin=\Psiout$ on this interval.
\end{proposition}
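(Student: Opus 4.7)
The plan is to adapt the proof of Proposition~\ref{prop-keyalpha1} by interchanging the roles of $\CB_m$ and $\CC_m$. Set $\mu_m:=m_{\CC_m,\alpha}$; by Lemma~\ref{lem-carac-mus}, it is the equilibrium state associated with $\phi-\Psi_{\CC_m}(\alpha)\BBone_{\CC_m}$, of pressure $\CP_\phi(\S)-\alpha$. Combining inequality~\eqref{equ-mino-psisss2} with the relation $\Psi_{\CB_m}(\alpha)\le\Psi_{\CC_m}(\alpha)\le 0$ (valid for $\alpha<0$), the convergence $\mu_m(\CD_m)/\mu_m(\CB_m)\to 0$ forces $\Psi_{\CB_m}(\alpha)-\Psi_{\CC_m}(\alpha)\to 0$, which yields $\Psiin=\Psiout$. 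Hence it suffices to establish the two analogues of Lemmas~\ref{lem-meslimcposi} and~\ref{lem-mesbordnul}: $\liminf_m\mu_m(\CC_m)>0$ and $\lim_m\mu_m(\CD_m)=0$.

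For the first, take any accumulation point $\nu$ of $(\mu_m)_m$ along a subsequence with $\mu_m(\CC_m)\to L=\liminf_m\mu_m(\CC_m)$. Passing to the limit in the pressure identity $h_{\mu_m}+\int\phi\,d\mu_m-\Psi_{\CC_m}(\alpha)\mu_m(\CC_m)=\CP_\phi(\S)-\alpha$, using upper semi-continuity of the entropy, continuity of $\phi$, and $\Psi_{\CC_m}(\alpha)\to\Psiout(\alpha)$, gives
\[
h_\nu+\int\phi\,d\nu-\Psiout(\alpha)L\ge\CP_\phi(\S)-\alpha.
\]
Combined with the variational principle $h_\nu+\int\phi\,d\nu\le\CP_\phi(\S)$, this forces $L\ge|\alpha|/|\Psiout(\alpha)|>0$, valid for every $\alpha<0$ sufficiently close to $0$.

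For the second, set $\delta:=\CP_\phi(\S)-\wt\CP_\phi(\pA)>0$ and choose $\alpha_2<0$ so that $|\Psiout(\alpha)|<\delta$ on $(\alpha_2,0)$, which is possible by continuity of $\Psiout$ at $0$. Fix $\alpha\in(\alpha_2,0)$ and let $\nu$ be any accumulation point of $(\mu_m)$; assume for contradiction $\nu(\pA)>0$ and decompose $\nu=p\nu_0+q\nu_1$ as in Lemma~\ref{lem-mesbordnul}, with $\nu_0$ concentrated on the invariant hull $H$ of $\pA$ and $p>0$. Using $\mu_m(\CC_m)\le\mu_m(\CC_j)$ for $m\ge j$ combined with $\Psi_{\CC_m}(\alpha)\le 0$, one gets $h_{\mu_m}+\int\phi\,d\mu_m-\Psi_{\CC_m}(\alpha)\mu_m(\CC_j)\ge\CP_\phi(\S)-\alpha$, and in the limit
\[
h_\nu+\int\phi\,d\nu-\Psiout(\alpha)\nu(\CC_j)\ge\CP_\phi(\S)-\alpha.
\]
Decomposing and bounding $h_{\nu_0}+\int\phi\,d\nu_0\le\CP_\phi(\S)-\delta$ together with $-\Psiout(\alpha)\nu_0(\CC_j)\le|\Psiout(\alpha)|$, one extracts (when $q>0$)
\[
h_{\nu_1}+\int\phi\,d\nu_1-\Psiout(\alpha)\nu_1(\CC_j)\ge\CP_\phi(\S)-\alpha+\frac{p\left(\delta-|\Psiout(\alpha)|+|\alpha|\right)}{q},
\]
whose extra term is strictly positive by the choice of $\alpha_2$. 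Since $\Psi_{\CC_j}(\alpha)\ge\Psiout(\alpha)$ with $\Psi_{\CC_j}(\alpha)-\Psiout(\alpha)\to 0$ as $j\to\infty$, choosing $j$ large absorbs this correction and yields $h_{\nu_1}+\int\phi\,d\nu_1-\Psi_{\CC_j}(\alpha)\nu_1(\CC_j)>\CP_\phi(\S)-\alpha$, contradicting the definition of $\Psi_{\CC_j}(\alpha)$. When $q=0$, $\nu$ is concentrated on $H$, so $h_\nu+\int\phi\,d\nu\le\CP_\phi(\S)-\delta$; the limit inequality then gives $|\Psiout(\alpha)|\ge\delta-\alpha>\delta$, again a contradiction. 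Hence $\nu(\pA)=0$, and outer regularity (as in Lemma~\ref{lem-mesbordnul}) gives $\mu_m(\CD_m)\to 0$.

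The main obstacle, compared with the $\alpha>0$ case, is that the term $-\Psiout(\alpha)\nu_0(\CC_j)$ is now nonnegative and must be absorbed into the pressure estimate on $\nu_0$; this forces the restriction $|\Psiout(\alpha)|<\delta$ and thereby keeps $\alpha_2$ close to $0$, preventing us from reaching a value analogous to $\alpha_0$ in the positive case.
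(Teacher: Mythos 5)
Your proof is correct and follows essentially the same strategy as the paper: two lemmas paralleling Lemmas~\ref{lem-meslimcposi2} and~\ref{lem-mesbordnul2}, combined with Inequality~\eqref{equ-mino-psisss2}, and the same decomposition $\nu=p\nu_0+q\nu_1$ along the invariant hull of $\pA$. The only noteworthy deviation is in ruling out $\nu(\pA)>0$: you bound $-\Psiout(\alpha)\nu_0(\CC_j)$ crudely by $|\Psiout(\alpha)|$, obtaining the restriction $|\Psiout(\alpha)|<\CP_\phi(\S)-\wt\CP_\phi(\pA)$, whereas the paper uses outer regularity to let $\nu_0(\CC_j)\to\nu_0(\ol A)\le\rho(\ol A)$ and ends with the weaker requirement $\alpha-\Psiout(\alpha)\rho(\ol A)<\alpha_1$ (hence a possibly larger interval $(\alpha_2,0)$), but both versions suffice for the statement as it only asks for some $\alpha_2<0$.
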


The proof of the proposition is an immediate consequence of these two lemmas and Inequality~\eqref{equ-mino-psisss2}.

\begin{lemma}\label{lem-meslimcposi2}
For any negative $\alpha$ we have $\disp\liminf_{m\rightarrow\pinf}{m_{\CC_m,\alpha}(\CC_m)}>0$
\end{lemma}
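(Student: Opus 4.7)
The plan is to mirror the strategy of Lemma~\ref{lem-meslimcposi}, but to exploit the sign of $\alpha$ in order to short-circuit the finer analysis involving $\alpha_0$ and the dotted systems. I would set $\mu_m := m_{\CC_m,\alpha}$, extract a subsequence along which $\mu_m(\CC_m) \to L := \liminf_m \mu_m(\CC_m)$ and $\mu_m \to \nu$ weakly to some $\s$-invariant probability $\nu$, and argue by contradiction, assuming $L = 0$.

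The main tool would be Lemma~\ref{lem-carac-mus} applied with $R = \CC_m$ and $S = \CP_\phi(\S) - \alpha$: the measure $\mu_m$ is the equilibrium state of $\phi - \Psi_{\CC_m}(\alpha)\BBone_{\CC_m}$ at pressure $\CP_\phi(\S) - \alpha$, which rewrites as
\[
h_{\mu_m} + \int \phi \, d\mu_m \;=\; \CP_\phi(\S) - \alpha + \Psi_{\CC_m}(\alpha)\,\mu_m(\CC_m).
\]
By the analysis of Section~\ref{sec-exis-psiA}, $\Psi_{\CC_m}(\alpha)$ converges to the finite number $\Psiout(\alpha)$ and is therefore bounded, so under the assumption $L = 0$ the right-hand side tends to $\CP_\phi(\S) - \alpha$. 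Passing to the limit on the left via upper semi-continuity of the metric entropy (valid since $\s$ is expansive) and continuity of $\phi$ yields
\[
h_\nu + \int \phi \, d\nu \;\ge\; \CP_\phi(\S) - \alpha \;>\; \CP_\phi(\S),
\]
the strict inequality being due to $-\alpha > 0$. This contradicts the variational principle and forces $L > 0$, as required.

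The only point requiring mild care is that $\BBone_{\CC_m}$ depends on $m$ and cannot be tested against the weak limit directly; but this is precisely circumvented, under the contradiction hypothesis, by the vanishing of the product $\Psi_{\CC_m}(\alpha)\mu_m(\CC_m)$. No serious obstacle is anticipated: the proof is strictly easier than that of Lemma~\ref{lem-meslimcposi}, where one had to decompose $\nu$ using the invariant hull of $\partial A$ because the limiting pressure $\CP_\phi(\S)-\alpha$ did not automatically exceed the topological pressure. Here the negativity of $\alpha$ does all that work.
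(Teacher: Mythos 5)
Your proposal is correct and follows essentially the same route as the paper's own proof: extract a subsequence so that $\mu_m(\CC_m)\to L$ and $\mu_m\to\nu$, pass to the limit in the equilibrium-state identity using upper semi-continuity of the metric entropy, and observe that $L=0$ would force $\CP_\nu(\phi)\ge\CP_\phi(\S)-\alpha>\CP_\phi(\S)$, contradicting the variational principle. Your side remark about why the $m$-dependence of $\BBone_{\CC_m}$ is harmless is also the same observation the paper makes implicitly (the product $\Psi_{\CC_m}(\alpha)\mu_m(\CC_m)$ converges to $\Psiout(\alpha)L$ along the chosen subsequence, so no direct weak-star testing against a fixed indicator is needed).
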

\begin{proof}
Let us denote by $\mu_m$ the measure $m_{\CC_m,\alpha}$, and pick any accumulation point $\nu$ of $(\mu_m)$ such that $\mu_m(\CC_m)$ converges (up to the correct subsequence) to $L:=\disp\liminf_{m\rightarrow\pinf}{\mu_m(\CC_m)}$.

Since $\mu_m$ is an equilibrium state we have 
\begin{equation}
h_{\mu_m}+\int \phi\,d\mu_m-\Psi_{\CC_m}(\alpha)\mu_m(\CC_m)=\CP_\phi(\S)-\alpha.
\label{equ-press2}
\end{equation}
By semi-continuity for the metric entropy and the continuity of $\phi$ 
we obtain 
\begin{equation}
\CP_\nu(\phi)=h_\nu+\int\phi d\nu
\geq \CP_\phi(\S)-\alpha+\Psiout(\alpha)L.\label{equ-minopress2}
\end{equation}
Therefore $L\neq0$ otherwise the right hand side would be larger than the topological pressure of $\phi$.
\end{proof}

\begin{lemma}\label{lem-mesbordnul2}
There exists $\alpha_2<0$ such that for any $\alpha\in(\alpha_2,0)$ we have
$\displaystyle\lim_{m\to\infty} m_{\CC_m,\alpha}(\CD_m)=0$.
\end{lemma}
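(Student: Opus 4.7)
The plan is to mirror the argument of Lemma~\ref{lem-mesbordnul}, working with $\mu_m := m_{\CC_m,\alpha}$ and the outer approximants $\CC_m$ (rather than with $m_{\CB_m,\alpha}$ and $\CB_m$); extra care is required because $\Psi_{\CC_m}(\alpha) \le 0$ for $\alpha \le 0$, which reverses several of the monotonicities used in the positive case. First, exactly as at the end of the proof of Lemma~\ref{lem-mesbordnul}, since $\CD_m$ decreases to $\pA$ and $\BBone_{\CD_j}$ is continuous, it suffices to show that any weak-$*$ accumulation point $\nu$ of $(\mu_m)$ satisfies $\nu(\pA)=0$.

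I would then proceed by contradiction: assume $\nu(\pA)>0$, write $\nu = p\nu_0+q\nu_1$ with $\nu_0$ concentrated on the invariant hull of $\pA$ (so that $h_{\nu_0}+\int\phi\,d\nu_0 \le \CP_\phi(\S)-\eta$ with $\eta:=\CP_\phi(\S)-\wt\CP_\phi(\pA)>0$), and start from the equilibrium identity
\[
h_{\mu_m}+\int\phi\,d\mu_m-\Psi_{\CC_m}(\alpha)\mu_m(\CC_m)=\CP_\phi(\S)-\alpha.
\]
Since $\CC_m\subset\CC_j$ for $m\ge j$ and $-\Psi_{\CC_m}(\alpha)\ge 0$, one may replace $\mu_m(\CC_m)$ by $\mu_m(\CC_j)$ while preserving $\ge\CP_\phi(\S)-\alpha$. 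Passing to the limit $m\to\infty$ using upper semi-continuity of entropy, continuity of $\BBone_{\CC_j}$, and $\Psi_{\CC_m}(\alpha)\to\Psiout(\alpha)$, then decomposing $\nu$ and substituting the bound on $\nu_0$, yields after isolation of the $\nu_1$-terms
\[
h_{\nu_1}+\int\phi\,d\nu_1-\Psiout(\alpha)\nu_1(\CC_j) \ge \CP_\phi(\S)-\alpha+\frac{p}{q}\bigl(\eta-\alpha+\Psiout(\alpha)\nu_0(\CC_j)\bigr).
\]
Using $\Psiout(\alpha)\le 0$ and $\nu_0(\CC_j)\le 1$, the bracket is at least $\eta-\alpha+\Psiout(\alpha)$, which by continuity of $\Psiout$ at $0$ is strictly positive on some half-neighborhood $(\alpha_2,0)$ of $0$; this defines $\alpha_2$.

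Finally, using $\Psi_{\CC_j}(\alpha)\to\Psiout(\alpha)$ as $j\to\infty$, I would absorb the $o(1)$ error $(\Psi_{\CC_j}(\alpha)-\Psiout(\alpha))\nu_1(\CC_j)$ into the strictly positive gap produced above to deduce that, for $j$ large enough,
\[
h_{\nu_1}+\int\phi\,d\nu_1-\Psi_{\CC_j}(\alpha)\nu_1(\CC_j) > \CP_\phi(\S)-\alpha,
\]
which by Lemma~\ref{lem-carac-mus} contradicts the fact that $\CP_\phi(\S)-\alpha$ is the topological pressure of $\phi-\Psi_{\CC_j}(\alpha)\BBone_{\CC_j}$. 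The main obstacle, compared with the positive-$\alpha$ case of Lemma~\ref{lem-mesbordnul}, is that here $\Psi_{\CC_j}(\alpha)\ge\Psiout(\alpha)$ goes the wrong way to promote a $\Psiout$-lower bound on the $\nu_1$-pressure into a $\Psi_{\CC_j}$-lower bound: one must carry the full term $\Psiout(\alpha)\nu_0(\CC_j)$ through the computation rather than drop it, and the quantitative margin $\eta-\alpha+\Psiout(\alpha)>0$ is only guaranteed in a (possibly small) half-neighborhood of $0$, which is precisely what restricts the conclusion to the ``near the average'' regime of point~\ref{point2} of the theorem.
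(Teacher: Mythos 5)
Your proposal is correct and follows essentially the same route as the paper: same decomposition $\nu=p\nu_0+q\nu_1$, same use of the equilibrium identity with $\mu_m(\CC_j)$ in place of $\mu_m(\CC_m)$, same limiting argument and same contradiction with Lemma~\ref{lem-carac-mus}. The only cosmetic differences are that you absorb the error $(\Psi_{\CC_j}(\alpha)-\Psiout(\alpha))\nu_1(\CC_j)$ directly rather than introducing an auxiliary $\delta>0$ as the paper does, and you bound $\nu_0(\CC_j)\le 1$ instead of using outer regularity to get the slightly sharper $\nu_0(\CC_j)\to\nu_0(\ol A)\le\rho(\ol A)$, which only affects the size of the interval $(\alpha_2,0)$, not the validity of the argument.
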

\begin{proof}
We keep the notation of the preceding lemma. Let $\nu$ be an accumulation point of $(\mu_m)$. We first show that $\nu(\pA)=0$.
By equality \eqref{equ-press2} we get that for any integers $m\ge j$, since $\CC_j\supset \CC_m$ and now $\Psi_{\CC_m}(\alpha)<0$, we have
\[
h_{\mu_m}+\int\phi d\mu_m -\Psi_{\CC_m}(\alpha)\mu_m(\CC_j) \ge \CP_\phi(\Sigma)-\alpha.
\]
Letting $m\to\infty$ gives, since the entropy is semi-continuous and $\BBone_{\CC_j}$ is continuous, that
\[
h_\nu+\int \phi d\nu -\Psiout(\alpha)\nu(\CC_j)\ge\CP_\phi(\Sigma)-\alpha.
\]
Assume for a contradiction that $\nu(\pA)>0$ and decompose $\nu=p\nu_0+q\nu_1$ as in the case $\alpha$ positive.
Let $\delta>0$. By definition of $\Psiout$, for any $j$ sufficiently large we have $-\Psi_{\CC_j}(\alpha)\nu_1(\CC_j)+\delta\ge -\Psiout(\alpha)\nu_1(\CC_j)$.
Since the entropy is affine, we get
\[
p\left(h_{\nu_0}+\int\phi d\nu_0-\Psiout(\alpha)\nu_0(\CC_j) \right)+q\left(h_{\nu_1}+\int\phi d\nu_1-\Psi_{\CC_j}(\alpha)\nu_1(\CC_j)+\delta\right)\ge \CP_\phi(\Sigma)-\alpha.
\]
This together with \eqref{equ2-bordpetitepression} gives
\[
q\left(h_{\nu_1}+\int \phi d\nu_1-\Psi_{\CC_j}(\alpha)\nu_1(\CC_j)+\delta\right)
\ge
\CP_\phi(\Sigma)-\alpha-p\left( \CP_\phi(\Sigma)-\alpha_1-\Psiout(\alpha)\nu_0(C_j)  \right).
\]
Since the pressure $\CP_{\nu_1}(\phi-\Psi_{\CC_j}(\alpha)\BBone_{\CC_j})\le P_\phi(\Sigma)-\alpha$ this implies that 
\[
q (P_\phi(\Sigma)-\alpha+\delta)\ge \CP_\phi(\Sigma)-\alpha-p\left( \CP_\phi(\Sigma)-\alpha_1-\Psiout(\alpha)\nu_0(C_j)  \right).
\]
By outer regularity of the measure $\nu_0$ we have $\nu_0(C_j)\to \nu_0(\ol A)\le \rho(\ol A)$ as $j\to\infty$ (recall that $\rho(\ol{A})=\sup_\mu\mu(\ol A)$). Since $\delta$ is arbitrary this gives
$p(-\alpha_1-\Psiout(\alpha)+\alpha)\ge0$, which is contradictory if $p>0$
 and $\alpha$ is small enough, since the function $\alpha\mapsto \alpha-\Psiout(\alpha)\rho(\ol A)$ is continuous and vanishes for $\alpha=0$. Thus there exists $\alpha_2<0$ such that if $\alpha\in]\alpha_2,0[$ we have $\nu(\pA)=0$.

The conclusion of the lemma follows as in the positive case.
\end{proof}

\begin{remark}\label{rem-alpha2}
We remark that under the assumption in point~\ref{point1} of the theorem, we always have $\nu(A)=0$ for the measure $\nu$ constructed in Lemma~\ref{lem-mesbordnul2}, 
therefore $\alpha_2=-\infty$.
\end{remark}

\section{A dynamical proof of the coincidence of inner and outer approximation in the case of totally negligible boundary}\label{sec-proof-th1}

In this section we give an alternative and somewhat more direct proof of the point~\ref{point1} in our theorem.
By Proposition~\ref{pro-psihalf} it suffices to show the equality $\Psiin=\Psiout$ on the interval $(-\infty,0)$ for the set $A$ and its complement $A^c$. The hypotheses on the boundary is completely symmetric if we replace $A$ by $A^c$, so it is sufficient to prove the equality on the interval $(-\infty,0)$ for the set $A$ only. However, we also prove that the equality holds some interval $(-\infty,\alpha_0)$ for some $\alpha_0>0$.
This in turn not only implies that the rate function $\Phi_A$ exists on the whole interval $[0,+\infty)$, but also shows that the formula \eqref{equ-psidonnephi} is satisfied on some interval $[0,\ul u)$ for some $\ol u>\frac{1}{\mu_\phi(A)}$.

\subsection{Infinite rate function for return times near the boundary}

Recall that  $\CD_m=\CC_m\setminus\CB_m$ is the $m$-cylindrical neighborhood of the boundary $\pA$.
For convenience, and for general computations, we remove the subscript "$m$" and just write $\CD$. Our aim is to show that, the probability that the successive return times into $\CD_m$ are small, is extremely small. We first prove a key lemma. Let $$\rho(\CD):=\sup_\mu\mu(\CD).$$

\begin{lemma}\label{lem-lim-mesmax-D}
With the assumption on $\pA$, $\lim_{m\rightarrow+\8}\rho(\CD_m)=0$.
\end{lemma}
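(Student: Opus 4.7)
The plan is a soft weak-$*$ compactness argument exploiting the monotonicity $\CD_{m+1}\subset\CD_m$, the compactness of the $\CD_m$ (hence upper semi-continuity of $\mu\mapsto\mu(\CD_m)$ on the space of Borel probabilities), and the structural hypothesis $\bigcap_m\CD_m=\pA$ combined with $\mu(\pA)=0$ for every invariant $\mu$.

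First I would observe that the sequence $(\rho(\CD_m))_m$ is non-increasing, since $\CD_{m+1}\subset\CD_m$ implies $\mu(\CD_{m+1})\le\mu(\CD_m)$ for every invariant $\mu$. Hence it suffices to exhibit a single subsequence along which $\rho(\CD_m)\to0$. To this end, for each $m$ pick a $\s$-invariant probability $\mu_m$ with
\[
\mu_m(\CD_m)\ge\rho(\CD_m)-\tfrac{1}{m}.
\]
The set of $\s$-invariant probabilities on $\S$ is weak-$*$ compact, so extract a subsequence $\mu_{m_k}\rightharpoonup\nu$, where $\nu$ is automatically $\s$-invariant (invariance is preserved in the weak-$*$ limit).

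Next, fix any $j$. For every $m_k\ge j$ we have $\CD_{m_k}\subset\CD_j$, so
\[
\mu_{m_k}(\CD_j)\ge\mu_{m_k}(\CD_{m_k})\ge\rho(\CD_{m_k})-\tfrac{1}{m_k}.
\]
Since $\CD_j$ is closed (indeed compact), the portmanteau theorem gives
\[
\nu(\CD_j)\ge\limsup_{k\to\infty}\mu_{m_k}(\CD_j)\ge\limsup_{k\to\infty}\rho(\CD_{m_k})=\lim_{m\to\infty}\rho(\CD_m).
\]
Now let $j\to\infty$. Since $\CD_j\downarrow\pA$, continuity from above of the finite measure $\nu$ yields $\nu(\CD_j)\downarrow\nu(\pA)$, so
\[
\nu(\pA)\ge\lim_{m\to\infty}\rho(\CD_m).
\]
By the standing hypothesis of point~\ref{point1} of the theorem, $\nu(\pA)=0$, which forces $\lim_{m\to\infty}\rho(\CD_m)=0$.

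I do not expect a serious obstacle here: the only point to handle with some care is that $\mu_m\mapsto\mu_m(\CD_m)$ mixes two varying objects, which is precisely why I sandwich $\CD_m$ inside the fixed closed set $\CD_j$ before passing to the weak-$*$ limit, and only afterwards let $j\to\infty$ by inner continuity of $\nu$.
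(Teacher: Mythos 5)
Your proof is correct and follows essentially the same approach as the paper: choose near-maximizing invariant measures $\mu_m$, extract a weak-$*$ accumulation point $\nu$, compare against a fixed $\CD_j$, and then let $j\to\infty$ using $\nu(\pA)=0$. The only cosmetic difference is that the paper exploits the fact that each $\CD_m$ is clopen (so $\BBone_{\CD_m}$ is continuous and weak-$*$ convergence gives $\mu_n(\CD_m)\to\nu(\CD_m)$ exactly), whereas you only invoke the weaker portmanteau inequality for closed sets, which suffices for the one-sided estimate you need.
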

\begin{proof}
Since $\CD_m$ is decreasing the limit $\rho_\8=\lim_{m\rightarrow+\8}\rho(\CD_m)$ exists.
For any $m$ there exists some probability $\mu_m$ such that 
$$\mu_m(\CD_m)\geq \rho(\CD_m)-\frac1{m}.$$
Let us pick any accumulation point $\mu$ for the sequence of probabilities $(\mu_m)$. Recall that the map $\BBone_{\CD_m}$ is continuous. Let us pick some integer $m$. For simplicity we write converging sequences instead of converging subsequences.
\[
\mu(\CD_m)=\lim_\ninf\mu_n(\CD_m)
\geq \liminf_\ninf \mu_n(\CD_n)
\geq \lim_\ninf\rho(\CD_n)-\frac1{n}=\rho_\8.
\]
By outer regularity of the measure $\mu$ this yields that $\rho_\8\le\lim\mu(\CD_m)= \mu(\pA)=0$.
\end{proof}

\begin{proposition}\label{pro-phid-infini}
For every $v>0$, there exists some $M=M(v)$ such that for every $m\geq M$, $\Phi_{\CD_m}(v)=-\8$.
\end{proposition}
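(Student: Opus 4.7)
The plan is to show that $\mu_\phi(r^n_{\CD_m}\le nv)$ decays faster than any exponential, which yields $\Phi_{\CD_m}(v)=-\8$. The driving mechanism is Lemma \ref{lem-lim-mesmax-D}: the maximal mass $\rho(\CD_m)$ that any $\s$-invariant measure can place on $\CD_m$ tends to $0$ as $m\to+\8$. My strategy is to dominate the return-time event by a deviation event for the Birkhoff sums of the continuous indicator $\BBone_{\CD_m}$ and then apply a Chernoff-type estimate combined with the variational principle.

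First, I would fix $v>0$ and use Lemma \ref{lem-lim-mesmax-D} to choose $M=M(v)$ large enough that $\rho(\CD_m)<1/v$ for every $m\ge M$. Setting $\CD=\CD_m$ for such an $m$ and $N=\lfloor nv\rfloor$, the event $\{r^n_\CD\le nv\}$ forces $x$ to have entered $\CD$ at least $n$ times among the iterates $\s x,\s^2 x,\ldots,\s^{N}x$, which gives the inclusion
\[
\{r^n_\CD\le nv\}\subset \left\{(S_{N}\BBone_\CD)\circ\s\ge n\right\}.
\]
Crucially, $\CD$ is a difference of two finite unions of $(-m,m)$-cylinders, which are all clopen, so $\BBone_\CD$ is a continuous function on $\S$, and the problem reduces to a deviation estimate for a Birkhoff sum of a continuous observable.

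Next, for any $\alpha>0$, Markov's inequality together with the $\s$-invariance of $\mu_\phi$ yields
\[
\mu_\phi(r^n_\CD\le nv)\le e^{-\alpha n}\int e^{\alpha S_{N}\BBone_\CD}\,d\mu_\phi.
\]
The Ruelle--Perron--Frobenius spectral theory, applied to the H\"older potential $\phi+\alpha\BBone_\CD$ (a direct global counterpart of the local formalism recalled in Section \ref{sec-CB-CC}), gives $\frac1{N}\log\int e^{\alpha S_{N}\BBone_\CD}d\mu_\phi \to \CP_\phi(\phi+\alpha\BBone_\CD)-\CP_\phi(\S)$, and the variational principle together with $h_\mu+\int\phi\,d\mu\le \CP_\phi(\S)$ and $\mu(\CD)\le\rho(\CD)$ bounds this limit by $\alpha\rho(\CD)$. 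Since $N/n\to v$, we deduce
\[
\limsup_{n\to\8}\frac1n\log\mu_\phi(r^n_\CD\le nv)\le -\alpha+v\alpha\rho(\CD)=\alpha\bigl(v\rho(\CD)-1\bigr),
\]
which is strictly negative by the choice of $M$, and tends to $-\8$ as $\alpha\to+\8$.

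The main obstacle is rigorously justifying the exponential-moment asymptotic invoked in the previous step, since the paper has so far developed its thermodynamic machinery via the induced transfer operator on $R$ rather than the direct transfer operator on $\S$. A short self-contained argument consists in summing the Gibbs weights $\exp(S_{N}(\phi+\alpha\BBone_\CD))$ over the $N$-cylinders having at least $n$ positions in $\CD$ and controlling the resulting partition function through the topological pressure $\CP_\phi(\phi+\alpha\BBone_\CD)$, which is bounded by $\CP_\phi(\S)+\alpha\rho(\CD)$ via the variational principle. Once this bound is in hand, the chain of inequalities above closes immediately and the conclusion follows.
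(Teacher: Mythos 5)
Your proof is correct, and it takes a genuinely different route from the paper's. The paper's argument works entirely inside the machinery it has already built for cylinder sets: since $\CD=\CD_m$ is a finite union of $m$-cylinders, the paper invokes the full LDP $\Phi_{\CD}(v)=\inf_{\wt\alpha<\alpha'}\{-v\wt\alpha+\Psi_\CD(\wt\alpha)\}$ developed in Section~\ref{sec-CB-CC}, and then observes that the slope of $\Psi_\CD$ at $-\infty$ is $1/\rho(\CD)$ (a fact traceable to Lemma~\ref{lem-carac-mus} and the behaviour of $m_S(\CD)$ as $S\to+\infty$). Once $\rho(\CD_m)<1/v$, the infimum over $\wt\alpha\to-\infty$ diverges to $-\infty$. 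Your argument instead bypasses the induced transfer operator and the cumulant generating function of the return-time cocycle entirely: you dominate $\{r^n_\CD\le nv\}$ by the Birkhoff-sum event $\{S_{\lfloor nv\rfloor}\BBone_\CD\circ\s\ge n\}$, apply a one-sided Chernoff bound, and control the exponential moment by the free-energy identity $\tfrac1N\log\int e^{\alpha S_N\BBone_\CD}d\mu_\phi\to \CP_\phi(\phi+\alpha\BBone_\CD)-\CP_\phi(\S)\le \alpha\rho(\CD)$, which for a Gibbs measure and locally constant $\BBone_\CD$ is a standard consequence of the Gibbs property and the variational principle. This yields the bound $\alpha(v\rho(\CD)-1)$, which goes to $-\infty$ as $\alpha\to+\infty$ precisely when $\rho(\CD)<1/v$, the same threshold the paper uses via Lemma~\ref{lem-lim-mesmax-D}.

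What the paper's route buys is coherence with its general framework: the Legendre-transform picture and the slope of $\Psi_\CD$ at $-\infty$ are the same ingredients used elsewhere in the paper, so the proof reuses already-established machinery. What your route buys is that it is more elementary and self-contained: it needs only the Gibbs property of $\mu_\phi$ and the global variational principle for pressure, and one does not have to know (or prove) the asymptotic slope $1/\rho(\CD)$ of the induced eigenvalue function. The only technical point you must discharge, which you correctly flag, is the exponential-moment upper bound for the Birkhoff sum. For the one-sided upper estimate you actually need, your proposed argument (summing the Gibbs weights over $N$-cylinders and comparing with the partition function of $\phi+\alpha\BBone_\CD$) does close it; one should merely take care that $\BBone_\CD\circ\s^j$ depends on coordinates $x_{j-m},\dots,x_{j+m}$, so the sum should be taken over two-sided $(1-m,N+m)$-cylinders, which costs only a harmless $O(m)$ adjustment that vanishes after dividing by $n$.
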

\begin{proof}
Let $v>0$. By Lemma~\ref{lem-lim-mesmax-D} we always can consider $m$ large enough such that 
$$\frac1{\mu_\phi(\CD)}>v.$$
Note that $\CD$ is a union of $(-m,m)$-cylinders. We thus can use the large deviation principle for $(r^k_\CD)$ (see Remark~\ref{rem-ldprectangle}) which gives
\begin{eqnarray}
\Phi_\CD(v)=\lim_{n\to \infty}\frac1{n}\log\mu_\phi\left\{\frac{r_\CD^{n}}{n}\leq
v\right\}&=&\inf_{\wt\alpha<\alpha'}\left\{-v\wt\alpha+\Psi_\CD(\wt\alpha)\right\},\label{equ1-esti-retour-D}
\end{eqnarray}
where $\alpha'=\alpha(\CD)>0$ (it thus depends on $m$). 

\begin{figure}\label{fig-phid}
\begin{center}
\unitlength 0.6 mm
\begin{picture}(122.5,95)(0,0)
\linethickness{0.1mm}
\put(80,0){\line(0,1){90}}
\put(80,90){\vector(0,1){0.12}}
\linethickness{0.1mm}
\put(0,60){\line(1,0){120}}
\put(120,60){\vector(1,0){0.12}}
\linethickness{0.3mm}
\multiput(22.5,10)(1.51,1.32){65}{\multiput(0,0)(0.15,0.13){5}{\line(1,0){0.15}}}
\linethickness{0.3mm}
\qbezier(110,95)(102.19,79.35)(97.38,71.53)
\qbezier(97.38,71.53)(92.56,63.71)(90,62.5)
\qbezier(90,62.5)(87.49,61.21)(78.47,59.41)
\qbezier(78.47,59.41)(69.45,57.6)(52.5,55)
\qbezier(52.5,55)(35.62,52.41)(21.78,50)
\qbezier(21.78,50)(7.95,47.59)(-5,45)
\put(100,75){\makebox(0,0)[cc]{}}

\put(124.5,60.5){\makebox(0,0)[cc]{$\wt\alpha$}}

\put(80.5,94.5){\makebox(0,0)[cc]{$\Psi_\CD(\wt\alpha)$}}

\put(117.5,57.5){\makebox(0,0)[cc]{}}

\put(45,20.5){\makebox(0,0)[cc]{$\wt\alpha v$}}

\linethickness{0.3mm}
\put(25,20){\line(0,1){25}}
\put(25,45){\vector(0,1){0.12}}
\put(25,20){\vector(0,-1){0.12}}
\put(35,32.5){\makebox(0,0)[cc]{$\Phi_\CD(v)$}}
\end{picture}
\end{center}
\caption{The rate function $\Phi_\CD(v)$ is the maximal distance between $\wt\alpha v$ and $\Psi_\CD(\wt\alpha)$ on the negative axis.}
\end{figure}
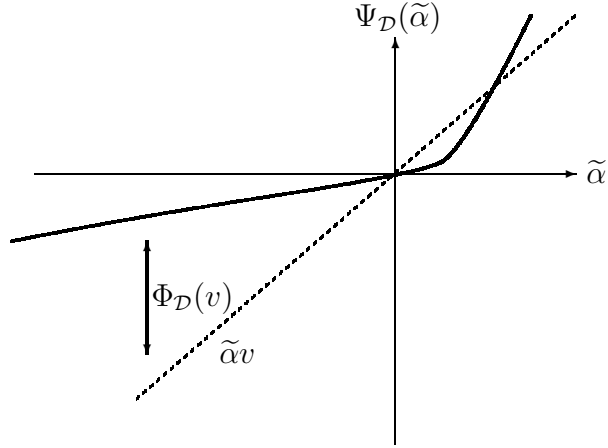

We emphasize that the slope of $\wt\alpha\mapsto \Psi_\CD(\wt\alpha)$ as $\wt\alpha$ goes to $-\8$ is $\disp\frac1{\rho(\CD)}$.
Lemma~\ref{lem-lim-mesmax-D} yields the existence of some $M=M(v)$ such that for every $m\geq M$, $\frac1{\rho(\CD_m)}<v$. 
This implies by~\eqref{equ1-esti-retour-D} that $\Phi_{\CD_m}(v)\le \lim_{\wt\alpha\to-\8}\wt\alpha\left(v-\frac1{\rho(D)}\right)=-\8$ (See Figure~\ref{fig-phid}).
\end{proof}

\subsection{Coincidence for positive values of $\alpha$}

Fix some $\alpha<\alpha_0$ and $\delta$ such that $\alpha+\delta<\alpha_0$. For sufficiently large $m$, all the $\Psi_{\CB_m}$ are defined on $[0,\alpha+\delta]$ and equicontinuous. then choose a uniform $\tau$ in Proposition~\ref{prop-concen-mass} such that the mass concentration holds, namely
\begin{equation}\label{equ-mass-concen}
\Psi_{\CB_m}(\alpha)=\lim\frac{1}{n}\log \int_{r_{\CB_m}^n\le n\tau} e^{\alpha r_{\CB_m}^n}d\mu_\phi
\end{equation}
for all sufficiently large $m$.

Let us pick some fixed positive $\eps$. We have
\begin{equation}\label{equ1-majo-retou-eps}
\int_{\ r^n_B\leq n\tau} e^{\alpha.r^n_\CB}\,d\mu_\phi\leq 
\int_{r^n_B\leq r^{n(1+\eps)}_\CC} e^{\alpha.r^{n.(1+\eps)}_\CC}\,d\mu_\phi
+\int_{r^{n(1+\eps)}_\CC<r^n_B\leq n\tau} e^{\alpha.r^n_\CB}\,d\mu_\phi.
\end{equation}

The first term in the right hand side of this equation is simply bounded by
\begin{equation}\label{equ-rBpetit}
\int e^{\alpha.r^{n.(1+\eps)}_\CC}\,d\mu_\phi \le e^{n(1+2\eps)\Psi_{\CC}(\alpha)}
\end{equation}
provided $n$ is sufficiently large.

We turn to the second term. 
The condition  $r^{n(1+\eps)}_\CC<r^n_B\leq n.\tau$ implies that $r_\CD^{n\eps}\leq n\tau$, hence we get
\begin{equation}
\int_{r^{n.(1+\eps)}_\CC<r^n_B\leq n.\tau} e^{\alpha.r^n_\CB}\,d\mu_\phi
\leq e^{\alpha.n.\tau}\mu_\phi\left(r^{n.\eps}_{\CD}\le n\tau\right)
= e^{\alpha.n.\tau}\mu_\phi(r^{n\eps}_\CD\leq (n.\eps).\frac{\tau}{\eps}).\label{equ2-rab-majo-psib}
\end{equation}

By Proposition~\ref{pro-phid-infini}, if we consider $m\geq M(\frac\tau\eps)$ for some fixed $\eps$, for $n$ large enough we get
$$\mu_\phi(r^{n.\eps}_\CD\leq n.\eps\frac\tau\eps)\leq e^{-2\alpha.\tau.n},$$
 Therefore,  (\ref{equ2-rab-majo-psib}) gives for $n$ sufficiently large
$$\int_{r^{n.(1+\eps)}_\CC<r^n_B\leq n.\tau} e^{\alpha.r^n_\CB}\,d\mu_\phi\leq 1.$$
Recall that $\Psi_\CC(\alpha)\geq 0$ for $\alpha\geq 0$. 
Then,  (\ref{equ1-majo-retou-eps}) together with \eqref{equ-rBpetit} and \eqref{equ-mass-concen} yield that
\[
\Psi_{\CB}(\alpha) \le (1+2\eps) \Psi_{\CC}(\alpha)
\]  
It follows from proposition~\ref{prop-inegal-gd} that
\begin{equation}\label{equ-ine-psia}
\Psiin(\alpha)\leq (1+2\eps)\Psiout(\alpha).
\end{equation}
Letting $\eps$ go to $0$ we get that $\Psiin=\Psiout$. This holds for every $\alpha<\alpha_1$ and for every $\alpha_1\leq\alpha_0$. Therefore it holds for every $\alpha<\alpha_0$. 

\subsection{Coincidence for negative values of $\alpha$}

We now do the proof for a fixed $\alpha<0$. Here again we omit the subscript ``$m$'' when it is not necessary. We also pick some positive $\eps$. Then, we have: 

\begin{eqnarray}\int e^{\alpha.r^n_\CB}\,d\mu_\phi
&\geq& \int_{r^n_B\leq r^{n.(1+\eps)}_\CC} e^{\alpha.r^{n.(1+\eps)}_\CC}\,d\mu_\phi\nonumber\\
&\geq & \int e^{\alpha.r^{n.(1+\eps)}_\CC}\,d\mu_\phi-\int_{r^n_B> r^{n.(1+\eps)}_\CC} e^{\alpha.r^{n.(1+\eps)}_\CC}\,d\mu_\phi.
\label{equ0-mino-gd-neg}
\end{eqnarray}
Let us pick some positive real $\wt\tau$ which will be chosen latter. We have 
\begin{eqnarray}
\int_{ r^n_B> r^{n.(1+\eps)}_\CC} e^{\alpha.r^{n.(1+\eps)}_\CC}\,d\mu_\phi&\leq & \mu_\phi\left({r^n_\CB>r^{n.(1+\eps)}_\CC>n.(1+\eps)\wt\tau}\right)+\,\nonumber\\
&&\hskip 1cm +\mu_\phi\left({r^n_B> r^{n.(1+\eps)}_\CC\cap n.(1+\eps)\wt\tau\geq r^{n.(1+\eps)}_\CC}\right)\nonumber\\
&\leq & \mu_\phi\left( r^{n.(1+\eps)}_\CC>n.(1+\eps)\wt\tau\right)
+\mu_\phi\left(r_\CD^{n\eps} \le n.(1+\eps)\wt\tau \right).\nonumber\\
&&\label{equ1-mino-gd-neg}
\end{eqnarray}
The large deviation principle for $r^k_\CC$ means 
\begin{eqnarray}
\Phi_\CC(\wt\tau):=\lim_{n\to \infty}\frac1n\log\mu_\phi\left\{\frac{r_\CC^{n}}{n}\geq
\wt\tau\right\}&=& \inf_{\wt\alpha<\alpha'}\left\{-
\wt\tau\wt\alpha+\Psi_\CC(\wt\alpha)\right\}
\label{equ2-esti-retour-C}
\end{eqnarray}
for some $\alpha'>\alpha_0$. 
Fix some $j$ and some $\wt\alpha\in(0,\alpha(\CB_j))$. Choose then $\wt\tau$ such that
\[
-\wt\tau.\wt\alpha+\Psi_{\CB_j}(\wt\alpha)<2.\Psi_{\CB_j}(\alpha)<0.
\]
Recall that on $\R_+$ all the $\Psi_\CC$ are lower than all the $\Psi_\CB$, and the converse holds on $\R_-$. 
Therefore we get for every $m$ that 
\begin{equation}\label{equ-esti-wttau}
-\wt\tau.\wt\alpha+\Psi_{\CC_m}(\wt\alpha)<2.\Psi_{\CC_m}(\alpha)<0.
\end{equation}
For $n$ large enough, (\ref{equ2-esti-retour-C}) and (\ref{equ-esti-wttau}) yield 
\begin{equation}\label{equ3-mino-gd-neg}
\mu_\phi\left(r^{n.(1+\eps)}_\CC>n.(1+\eps)\wt\tau\right)\leq e^{n.(1+\eps)(\Phi_\CC(\wt\tau)+\eps)}\leq e^{n.(1+\eps)(2\Psi_\CC(\alpha)+\eps)}.
\end{equation}
Following Proposition~\ref{pro-phid-infini} we get 
\begin{equation}\label{equ4-mino-gd-neg}
\mu_\phi\left(r_\CD^{n\eps} \le n.(1+\eps)\wt\tau \right)\leq e^{2n.(1+\eps).\Psi_\CC(\alpha)}
\end{equation}
for every large enough $m$ and for every large enough $n$. Therefore (\ref{equ0-mino-gd-neg}),  (\ref{equ3-mino-gd-neg}),  and (\ref{equ4-mino-gd-neg}) yield for every large enough $m$ and for every large enough $n$:
$$\int e^{\alpha.r^n_{\CB_m}}\,d\mu_\phi\geq e^{n.(1+\eps).(\Psi_{\CC_m}(\alpha)-\eps)}-e^{2n.(1+\eps).\Psi_{\CC_m}(\alpha)}-e^{n.(1+\eps)(2\Psi_{\CC_m}(\alpha)+\eps)},$$
For fixed $m$, letting $n$ go to $+\8$ and using proposition \ref{prop-inegal-gd} with $\alpha<0$ we get for every $\eps>0$
\begin{equation}\label{equ2-ine-psia}
\Psiin(\alpha)\geq (1+\eps)(\Psiout(\alpha)-\eps).
\end{equation}
When $\eps$ goes to 0, we get that $\Psiin(\alpha)=\Psiout(\alpha)$ for $\alpha<0$. 

\bibliographystyle{amsplain}

\end{document}